\newcommand{\R}{\mathbb{R}}
\newcommand{\N}{\mathbb{N}}
\renewcommand{\S}{\mathbb{S}}
\newcommand{\Z}{\mathbb{Z}}
\renewcommand{\S}{\mathbb{S}}
\newcommand{\I}{\mathcal I}
\newcommand{\eps}{\varepsilon}
\newcommand{\uinf}{v}
\DeclareMathOperator{\tr}{Tr}
\newtheorem{proposition}{Proposition}
\newtheorem{lemma}{Lemma}
\newtheorem{theorem}{Theorem}
\newtheorem{claim}{Claim}
\theoremstyle{definition}
\theoremstyle{remark}
\newtheorem{remark}{Remark}
\newtheorem{example}{Example}
\definecolor{linkcol}{rgb}{0,0,0.4} 
\definecolor{citecol}{rgb}{0.5,0,0} 
\definecolor{darkred}{rgb}{0.75,0,0.1} 
\definecolor{darkblue}{rgb}{0.1,0,0.55}
\author{Guy Barles}
\address{Laboratoire de Math\'ematiques et Physique Th\'eorique,
    CNRS UMR 7350, F\'ed\'eration Denis Poisson,
    Universit\'e Fran\c{c}ois Rabelais, 
    Parc de Grandmont,
    37200 Tours, France} 
\email{barles@lmpt.univ-tours.fr}
\author{Emmanuel Chasseigne}
\address{Laboratoire de Math\'ematiques et Physique Th\'eorique,
    CNRS UMR 7350, F\'ed\'eration Denis Poisson,
    Universit\'e Fran\c{c}ois Rabelais, 
    Parc de Grandmont,
    37200 Tours, France}
\email{emmanuel.chasseigne@lmpt.univ-tours.fr}
\author{Adina Ciomaga} 
\address{Department of Mathematics, 
    University of Chicago,
    5734 University Avenue, 
    Chica\-go, IL 60637, USA}
\email{adina@math.uchicago.edu}
\author{Cyril Imbert} \address{CNRS, Laboratoire d'Analyse et de
  Math\'ematiques Appliqu\'ees, UMR 8050, Universit\'e Paris-Est
  Cr\'eteil, 61 avenue du g\'en\'eral de Gaulle 94010 Cr\'eteil cedex
  France} \email{cyril.imbert@u-pec.fr}
\title[Large Time Behavior of Integro-Differential Equations]
    {Large Time Behavior of Periodic Viscosity Solutions for
Uniformly Elliptic Integro-Differential Equations}
\date\today
\keywords{Parabolic nonlinear integro-differential equations, Ergodic
  problem, Long time behavior, Strong maximum principle, Lipschitz
  estimates} \subjclass[2010]{35B40, 35R09, 35D40, 35D10}
\begin{document}

\begin{abstract}
In this paper, we study the large time behavior of solutions of a
class of parabolic fully nonlinear integro-differential equations in a
periodic setting. In order to do so, we first solve the \emph{ergodic
  problem} (or \emph{cell problem}), i.e. we construct solutions of
the form $\lambda t + v(x)$. We then prove that solutions of the
Cauchy problem look like those specific solutions as time goes to
infinity.  We face two key difficulties to carry out this classical
program: (i) the fact that we handle the case of ``mixed operators''
for which the required ellipticity comes from a combination of the
properties of the local and nonlocal terms and (ii) the treatment of
the superlinear case (in the gradient variable). Lipschitz estimates
previously proved by the authors (2012) and Strong Maximum principles
proved by the third author (2012) play a crucial role in the analysis.
\end{abstract}

\maketitle

\section{Introduction}\label{sec:Intro}

In this paper, we provide new results on the large time behavior of
viscosity solutions for parabolic integro-differential equations (PIDE
in short). 

\subsection{A model equation}

In order to describe our approach and our results, we consider the
following model example
\begin{equation}\label{model}
\partial_t u - \Delta_{x_1} u + (-\Delta)_{x_2}^\beta u +|Du|^m +
b_1(x_1) |D_{x_1}u| + b_2(x_2) |D_{x_2}u| = f(x) \;
\end{equation}
where $u:\R^d \times [0,+\infty) \to \R$ is the unknown function
  depending on $x=(x_1,x_2)\in \R^d$ with $x_1 \in \R^{d_1},$ $x_2\in
  \R^{d_2}$, $d_1+d_2 = d$, and $t\geq 0$. $\partial_t u$ denotes the
  derivative of $u$ with respect to $t$, while
  $Du=(D_{x_1}u,D_{x_2}u)$ stands for its gradient with respect to the
  space variable $x$. The operator $\Delta_{x_1} u$ is the usual
  Laplacian with respect to the $x_1$-variable, while
  $(-\Delta)_{x_2}^\beta u$ denotes the fractional Laplacian of
  exponent $\beta\in(1,2)$ with respect to the $x_2$-variable

\[(-\Delta)_{x_2}^\beta u (x,t) = 
\int_{z_2 \in \R^{d_2}} \big( u(x_1, x_2 + z_2,t) - u(x,t) -
D_{x_2}u(x,t)\cdot z_2
1_{B^{d_2}}(z_2)\big)\frac{dz_2}{|z_2|^{d_2+\beta}}\; ,\] where
$D_{x_2}u$ is the gradient of $u$ with respect to the $x_2$-variable
and $B^{d_2}$ is the unit ball in $\R^{d_2}$.  Finally, we assume that
$m\geq 1$ and $f$, $b_i$ ($i=1,2$) are real-valued, Lipschitz
continuous functions which are respectively $\Z^d$- and $\Z^{d_i}$-
periodic.  Because of this last assumption, the solution is expected
to be $\Z^d$-periodic if the initial datum is.

\subsection{Aim}

For such a PIDE, our aim is to show that, for large times ($t \to
+\infty$), the solution $u$ asymptotically behaves like $\lambda t+
v(x)$ where $(\lambda,v)\in\R\times C^0(\R^d;\R)$ is a solution of the
associated ergodic (or additive eigenvalue) problem which, for
(\ref{model}), reads
\begin{equation}\label{erg-model} 
- \Delta_{x_1} v + (-\Delta)_{x_2}^\beta v +|Dv|^m + b_1(x_1)
|D_{x_1}v| + b_2(x_2) |D_{x_2}v|= f(x)- \lambda .
\end{equation}
A key result in this direction is that there exists a unique $\lambda
\in \R$ such that this ergodic problem has a periodic (Lipschitz)
continuous solution $v$. With such a result in hand, one has to prove the
convergence, namely that for every solution $u$ of \eqref{model},
there exists a solution $(\lambda,v)$ of the ergodic problem such that
$u(x,t)-(\lambda t+ v(x)) \to 0$ as $t\to +\infty$, uniformly in $x\in
\R^d$.

To do so, we follow a by-now rather classical method which was
systematically developed in \cite{bs01}. To carry out this method, the
two key ingredients are estimates on the modulus of continuity
(Lipschitz estimates in our case),  and a Strong Maximum
  Principle, both for equations (\ref{model}) and (\ref{erg-model}).

These needed results were obtained in previous papers. Lipschitz and
H\"older estimates were obtained in \cite{bcci12} where an emphasis
was made on ``mixed operators'', i.e. on equations like (\ref{model})
where the ``uniform ellipticity'' comes from both integral and
differential terms, namely the $\Delta_{x_1}$ and
$(-\Delta)_{x_2}^\beta$ terms. This particular form of the equation
creates also difficulties for the Strong Maximum (or Comparison)
Principle, see \cite{c12}.

In addition to this specific difficulty coming from ``mixed
operators'', we also want to handle the ``superlinear case'', namely
the case when $m>1$ in (\ref{model}). This requires additional work
and ideas both for solving (\ref{erg-model}) and for the convergence
proof since one needs Lipschitz estimates to ``linearize'' this term.
For this reason, we distinguish two cases: a sublinear and a
superlinear one, with respect to the gradient growth.

In the sublinear case the estimates on the modulus of continuity come
from the ellipticity of the equation. Indeed, as shown in
\cite{bcci12}, even though the equation is completely degenerate in
the local term and in the nonlocal term, their combination render the
diffusion uniformly elliptic. In the superlinear case, although most
of these estimates are derived through the same type of arguments
under suitable structure conditions on the nonlinearities, there are
some situations where they come from the gradient term (cf. the proof
of Lemma~\ref{lemma:unif_cont}).

It is worth pointing out that some of our results (in particular in
the sublinear case) could be proved in an easier way since they do not
require such Lipschitz estimates but we have chosen to systematically
use them in order to unify the paper and to keep it with a reasonable
length.

\subsection{The general framework}

Let us now present the general framework of our analysis.  Although
not the most general one, we have chosen this framework since it
carries the key difficulties. Extensions to a larger framework are
given in Section~\ref{sec:extensions}.  We consider parabolic PIDE of
the form
\begin{align}\label{eq:main} \partial_t u +&
  F_1(x_1,D_{x_1}u, D_{x_1x_1}^2u, \mathcal I_{x_1}[x,u]) &\\ +&
  F_2(x_2,D_{x_2}u, D_{x_2x_2}^2u, \mathcal I_{x_2}[x,u]) + H(Du) =
  f(x), &\hbox{ in  }\R^d \times (0,+\infty) \nonumber 
\end{align}
(where $D_{x_i x_i}^2 u$ denotes the Hessian matrix with respect to
$x_i$, $i=1,2$) subject to the initial condition
\begin{equation} \label{eq:ic} u(x,0)
    =  u_0(x),\quad   \hbox{in   }\R^d 
\end{equation} 
where $u_0$ and $f$ are $\Z^d$-periodic functions, $H$ is a continuous
function in $\R^d$ and $F_1, F_2$ are nonlinear ``elliptic'' terms
(precise assumptions are given in
Section~\ref{sec:assumptions}). We point out that each nonlinear term
involves second-order derivatives and a nonlocal operator of
``order'' $\beta \in (1,2)$. These operators $ \mathcal I_{x_i}$ are
of \emph{L\'evy-It\^o type}: if $\varphi : \R^{d_i} \to \R$ is a
smooth bounded function, then

    \[ \mathcal{I}_{i}[\varphi](x_i) 
    = \int_{z_i\in \R^{d_i}} \big( \varphi (x_i +
    j_i(x_i,z_i)) - \varphi(x_i) - D\varphi(x_i)\cdot
    j_i(x_i,z_i)1_{B^{d_i}}(z_i)\big)d\mu_i(z_i)\] 
where $j_i$ are the \emph{jump functions} and $\mu_i$ the
    \emph{L\'evy measures} (here we also refer the reader to
    Section~\ref{sec:assumptions} for precise assumptions). Then we set 
\[ \mathcal I_{x_1} [x,u]:= \mathcal
    {I}_1[u(\cdot,x_2,t)](x_1)\quad \hbox{and}\quad \mathcal I_{x_2} [x,u]:=
    \mathcal {I}_2[u(x_1,\cdot,t)](x_2)\; .\]

We recall that these operators are ``natural'' generalizations of
diffusion operators of the form $\frac12 \tr( \sigma(x)\sigma^T(x)
D^2u)$ where $\sigma$ is the diffusion matrix and $\sigma^T$ denotes
its transpose; in particular, they can be characterized as the
infinitesimal generators of some solutions of stochastic differential
equations driven by a general L\'evy process, instead of a Brownian
motion; see \cite{applebaum} for more details.

In the case of (\ref{model}), we have
\begin{align*} 
F_1(x_1,D_{x_1}u, D_{x_1x_1}^2u, \mathcal I_{x_1}[x,u])&:=- \Delta_{x_1} u + b_1(x_1) |D_{x_1}u|\; ,\\
F_2(x_2,D_{x_2}u, D_{x_2x_2}^2u, \mathcal I_{x_2}[x,u]) &:=(-\Delta)_{x_2}^\beta u+ b_2(x_2) |D_{x_2}u| \; ,\\
 H(Du) &:=  |Du|^m  \; .
\end{align*}
By choosing such a framework, we want to shed some light on the fact
that each nonlinear term can have different forms of (local or
nonlocal) diffusions in the different sets of variables while the
$H$-term carries the possible super-linearity (see Assumptions (H-a)
and (H-b) below). Of course this simplified framework can be
generalized and we refer the reader to Section~\ref{sec:extensions}
devoted to the extensions to see how this can be done for both the
$H$-term but also for $F_1$ and $F_2$.

\subsection{Known results}

The study of large time behavior of solutions of Hamilton-Jacobi and
fully nonlinear parabolic equations has attracted a lot of attention,
in different contexts. It follows closely the development of the
viscosity solution theory.  Indeed, the first result one could cite
about the large time behavior of solutions of first-order $x$
independent equations is already contained in \cite{Lions} (see also
\cite{b85}).

When studying the large time behavior of solutions of such equations,
one has first to construct solutions of the form $\lambda t + v(x)$;
equivalently, one has to solve a stationary equation depending on a
parameter $\lambda$ which is unknown. This is the so-called cell (or
ergodic) problem. Then one has to prove that the solution of the
Cauchy problem indeed converges towards $\lambda t + v(x)$ for some
solution $(\lambda,v)$ of the cell problem.
The first step was completed in the seminal (yet unpublished) work of
Lions, Papanicolaou and Varadhan \cite{lpv} about homogenization of
first-order Hamilton-Jacobi equations (in the case of coercive
Hamiltonians). Indeed, solving the cell problem is the way the
``effective'' (or averaged) Hamilton-Jacobi Equation is determined.

The second step was first completed, for Hamilton-Jacobi Equations, by
Namah and Roquejoffre \cite{NR97b} for equations with a particular
structures and then by Fathi \cite{F98} in the general case of convex
and coercive Hamilton-Jacobi Equations in the periodic setting. It is
worth pointing out that the proof of \cite{NR97b} is based on pde
methods while the results of \cite{F98} relies on dynamical systems
arguments (``Weak KAM method''). Afterwards J.-M. Roquejoffre \cite{R}
and A. Davini and A. Siconolfi in \cite{DS} refined the approach of
A. Fathi and they studied the asymptotic problem for Hamilton-Jacobi
Equations on a compact manifold.

Barles and Souganidis extended in \cite{BS00b} the previous results
and showed the asymptotic behavior under weaker convexity assumptions,
using viscosity solutions techniques. They also gave counterexamples
in \cite{BS00a} on the asymptotic behavior of solutions of
Hamilton-Jacobi Equations, when the initial datum is not periodic
anymore. Motivated by the latest works, Ishii established in
\cite{I08} a general convergence result for Hamilton-Jacobi equations
on the whole space. For asymptotic behavior of solutions of various
boundary value problems for Hamilton-Jacobi Equations we refer to the
works \cite{II08,I11, M08, BM12, GQM12,BIM12} based either on weak KAM
theory or on PDEs techniques. Lastly, we mention that Dirr and
Souganidis showed in \cite{ds05} that the asymptotic behavior remains
true if one perturbes with additive noise viscous or non-viscous
Hamilton-Jacobi equations, periodic in space.

As we already mentioned, our approach follows the ideas systematically
developed by Barles and Souganidis in \cite{bs01}, where they
described the long time behavior of space-time periodic solutions of
quasilinear PDEs.  This behavior has also been established for
semilinear equations, with methods of degree theory, by Namah and
Roquejoffre in \cite{NR97a}. Long time behavior and ergodic problems
for second order PDEs with Neumann boundary conditions have been
studied in the series of papers \cite{gl05,L08,BLLS08}. Recently,
viscous Hamilton-Jacobi equations have been treated by Tchamba in the
superquadratic case \cite{T10}, and Barles, Porretta and Tchamba in the
subquadratic case \cite{BPT10}. 

We already mentioned that, when studying homogenization of
Hamilton-Jacobi or fully nonlinear elliptic equation, it is necessary
to solve a cell problem.  As far as nonlinear integro-differential
equations with periodic data are concerned, we can first mention a
series of papers devoted to the homogenization of dislocation
dynamics, see e.g. \cite{imr,fim}. Some results were also obtained for
linear integro-differential equations with periodic data in
\cite{ccr,arisawa} by analytical methods and in
\cite{tomisaki,ft94,franke,franke2} by probabilistic ones.
Homogenization of Markov processes governed by certain Levy operators
was discussed by Horie Inuzuka and Tanaka in \cite{hit77} and general
results on their convergence in law were established in \cite{ft94}
(see also \cite{tomisaki}).  In \cite{franke}, the author studies the
convergence in law of a rescaled solution of a stochastic differential
equation driven by an $\beta$-stable L\'evy process, $\beta >1$.  The
author points out that such an homogenization question was raised in
\cite[p.~531]{blp}. It is also mentioned in the introduction of
\cite{rv09} that they are very few such results.

For nonlinear equations, Schwab \cite{s10}
established homogenization results for a large class of equations.  In
the previously mentioned papers, both in the linear and the nonlinear
cases, an ergodic problem has to be solved. In \cite{franke}, since
the equation is linear, linear equation techniques are used such as
the study of the resolvent. In papers such as
\cite{imr,fim,arisawa,s10}, viscosity solutions / maximum principle
techniques are used.

\subsection{Organization of the article}

The paper is organized as follows. In Section~\ref{sec:assumptions},
assumptions on the nonlocal operator $\I$ and the nonlinearities $H,
F_1,F_2$ are given.  We also point out which known results from
\cite{bi08,c12,bcci12} can be used with such a set of assumptions.  In
Section~\ref{sec:ergodic}, we solve the stationary ergodic problem. In
Section~\ref{sec:longtime}, we state and prove the convergence result
for solutions of the Cauchy problem. In Section~\ref{sec:extensions},
we explain how to extend the previous results to even more
equations. In Section~\ref{sec:ex}, we give examples of applications
of our results on specific equations.

\section{Assumptions}\label{sec:assumptions}

Before stating our assumptions, we want to point out that, in order to
avoid too many technicalities, we are going to use simplified
assumptions for the main results, in particular some strong
assumptions on the homogeneity of $F_1$ and $F_2$. Then in Section
\ref{sec:extensions} we explain how to extend our results to more
general situations, under weaker homogeneity and growth assumptions.

Thus, the set of hypotheses below may not always seem consistent when
considered as a whole. Typically, assuming the homogeneity assumption
(F0) would simplify the general growth hypothesis (F2) a lot. But we
keep these assumptions as such since in the extension section, we shall
use some of them in their general versions.

We begin with assumptions on the singular measures and jump functions.
\begin{itemize}
\item[(M1)] (\textit{Integrability}) For $i=1,2$, $\mu_i$ is a L\' evy
  measure, i.e. there exists $\tilde C_\mu >0$ such that,
 \[\int_{\R^{d_i}}\min (|z_i|^2,1)d\mu_i(z_i) \leq \tilde C_\mu \quad \hbox{(for $i=1,2$)}.\]
\item[(M2)] (\textit{Regularity of the measures}) There exists $\bar C_\mu$ and, for $i=1,2$, there exists $\beta_i \in(1,2)$ such that, for $\delta>0$ small enough
\[\int_{B^{d_i}\setminus B^{d_i}_\delta} |z_i| d\mu_i(z_i) \leq \bar C_\mu \delta^{1-\beta_i}\; ,\]
where $B^{d_i}_\delta$ is the ball centered at $0\in \R^{d_i}$ and of radius $\delta$.
\end{itemize}
\begin{itemize}
\item [(M3)] (\textit{Jump size})	
There exist two constants $c_0, C_0>0$ such that, for any $x\in\R^d$ and $i=1,2$
$$ c_0|z_i|\leq |j_i(x_i,z_i)|\leq C_0|z_i|, \quad \hbox{for all  } z_i \in B^{d_i}.$$
\item [(M4)] (\textit{Regularity of the jumps}) There exist a constant $\tilde C_0>0$ such that, for any $x_i,y_i\in\R^{d_i}$
$$ |j_i(x_i,z_i)-j_i(y_i,z_i)|\leq 	 \begin{cases}
	      \tilde C_0|z_i||x_i-y_i|, & \hbox{for all  }  z_i \in B^{d_i},\\
	      \tilde C_0|x_i-y_i|,    & \hbox{for all  }  z_i \in \R^{d_i}\setminus B^{d_i}.
	    \end{cases}$$
\item[(M5)] (\textit{Nondegeneracy}) $\;$ 
  There exists a constant $C_\mu>0$ and for $i=1,2$, there exist 
  $\beta_i\in(1,2)$ such that, for every $p_i\in\R^{d_i}$, 
  there exist $0<\eta_i<1$ such that the following holds for any
  $x_i\in\R^{d_i}$ and $\delta >0$
  $$ \int_{\mathcal C^i_{\eta_i,\delta}(p_i)}|j_i(x_i,z_i)|^2 d\mu_i(z_i)\geq C_\mu
\ 
\eta_i^{\frac{d_i-1}{2}}\ \delta^{2-\beta_i},$$
with $$\mathcal C^i_{\eta_i,\delta}(p_i):= \big\{z_i \in \R^{d_i}\ ; |j_i
(x_i,z_i)|\leq\delta, \space\
  (1-\eta_i)|j_i(x_i,z_i)||p_i|\leq|p_i\cdot j_i(x_i,z_i)|\big\}.$$
\end{itemize}
We point out that these assumptions on the measures and jumps are
either classical or used in \cite{bi08,bcci12,c12} to obtain
uniqueness, regularity results and Strong Maximum/Comparison
Principle.\medskip

Now we turn to the assumptions on the nonlinearities $F_i$, $i=1,2$,
$H$ and the source term $f$.  Since these assumptions are the same for
$F_1,F_2$, we write them for a general $F$ and in $\R^{\tilde d}$,
having in mind that they hold for $i=1,2$, $F=F_i$ with $\tilde d =
d_i$. We denote by $\S_{\tilde d}$ the space of $\tilde d \times
\tilde d$ symmetric matrices.
\begin{itemize}
 \item[(F0)] (\textit{Homogeneity}) For any $\lambda>0$, $x,p\in \R^{\tilde d},X \in \S_{\tilde d},l\in \R$, we have
$$F(x, \lambda
   p,\lambda X,\lambda l)= \lambda F(x,p,X,l).\ $$
 \item[(F1)] (\textit{Periodicity-Continuity}) $f$ and $x\mapsto
   F(x,\cdot,\cdot,\cdot)$ are continuous and
   $\Z^{\tilde{d}}$-periodic  in $\R^d$.
\item [(F2)] (\textit{Ellipticity-Growth conditions}) There exist two
  bounded functions ${\Lambda_1},{\Lambda_2}:\R^{\tilde d}\rightarrow
  [0,\infty)$ and a constant $\Lambda_0 >0$ such that
    ${\Lambda_1}(x)+{\Lambda_1}(x) \ge \Lambda_0$ and some constants
    $k\geq0$, $\tau\in(0,1]$, such
    that for any $x,y\in\R^{\tilde d}$, $p\in\R^{\tilde d}$, $l\leq
    l'$ and any $\varepsilon >0$
\begin{eqnarray*}&& 
F(y,p,Y,l')-F(x,p,X,l) \leq \\ && \hspace{2cm} 
{\Lambda_1}(x)\left((l-l') + \frac{|x-y|^2}{\varepsilon}+
|x-y|^\tau|p|^{k+\tau} + {{C_1}|p|^{k}}\right)+ \\ 
&& \hspace{2cm} {\Lambda_2}(x)\left( \tr(X-Y) + \frac{|x-y|^2}{\varepsilon} +
|x-y|^\tau|p|^{2+\tau}+{{C_2}|p|^2}\right)
\end{eqnarray*}
if $X,Y\in\S_{\tilde d}$ satisfy the inequality
\begin{equation}\label{eq::matrix_ineq}
- \frac{1}{\varepsilon}
\begin{bmatrix} 
I &  0 \\
0 & I 
\end{bmatrix} 
\leq
\begin{bmatrix} 
X &  0 \\
0 & -Y 
\end{bmatrix} 
\leq \frac{1}{\varepsilon}
\begin{bmatrix} 
 Z & -Z \\
-Z &  Z 
\end{bmatrix},
\end{equation}
with $ Z = I - \omega\,\hat p_0\otimes \hat p_0$, for some unit vector $\hat
p_0\in\R^{\tilde d}$, and {$\omega\in (1,2)$}.
\end{itemize}

\noindent As we mention it at the beginning of the section, (F2) does not seem to
be consistent with (F0), nor will be the next assumption (F3).  However,
we will comment the more general framework in Section
\ref{sec:extensions} dedicated to extensions. In the sequel, we use the
notations $\Lambda^{i}_1, \Lambda^{i}_2, k^{i}, \tau^{i}$ for the
quantities appearing in (F2) when they are related to $F_i$.

\begin{itemize}
\item [(F3)] (\textit{Lipschitz Continuity}) $(p,X,l)\mapsto F(x,p,X, l)$ is
  Lipschitz continuous, uniformly in $x\in \R^{\tilde d}$.
\item [(F4)] (\textit{Regularity}) There exists a modulus of
  continuity $\omega_F$ such that for any $\varepsilon >0$
\begin{eqnarray*}
 F(y,\frac{x-y}{\varepsilon},Y,l) - F(x,\frac{x-y}{\varepsilon},X,l)
 \leq \omega_{F}\left(\frac{|x-y|^2}{\varepsilon} + |x-y|\right)
\end{eqnarray*}
for all $x,y\in \R^{\tilde d}$, $X, Y \in \S_{\tilde d}$ satisfying
the matrix inequality (\ref{eq::matrix_ineq}) with $Z= I$ and
$l\in\R$.
\end{itemize}

\noindent Finally, on the Hamiltonian $H$, we assume one of the two
following hypotheses.
\begin{itemize}
 \item [(H-a)] (\textit{Sublinearity}) $H$ is locally Lipschitz
   continuous and there exists a Hamiltonian $\overline{H}(p)$,
   $1$-positively homogeneous such that
\[\lim_{k\rightarrow\infty} \frac1kH(kp) = \overline{H}(p)\; .\]
 \item [(H-b)] (\textit{Superlinearity}) $H$ is locally Lipschitz
   continuous and there exists $m > 1$, $\eta >0$, $r_0>0$ and
   $0<\mu_0<1$ such that, for all $\mu \in [\mu_0,1]$ and $|p|\geq
   r_0$
   \[ \mu H(\frac{p}{\mu})-H(p)\geq \eta (1-\mu)|p|^m\; .\]
\end{itemize}
We also use below a consequence of (H-b), namely the fact that there
exists $\hat \eta>0$, $c_0 \ge 0$ such that, for all $p \in \R^d$ and
all $c \ge c_0$,
\begin{equation}\label{propH}
c^{-1} H(cp)-H(p) \ge  \hat\eta c^{m-1} |p|^m - (\hat \eta)^{-1}\; .
\end{equation}
We leave the proof of (\ref{propH}) to the reader: for $|p|\geq r_0$,
it comes from (H-b) while, for $|p|\leq r_0$, it can be deduced from the
first case, taking $\hat \eta$ small enough.

\medskip

\paragraph{\bf Comments on the list of assumptions}

We need a long list of assumptions in order to apply known results
about uniqueness, regularity, Strong Maximum/Compa\-rison Principle
\textit{etc.} for different equations. In order to convince the reader
that we can indeed apply all these theorems, we next make a precise
list of the ones we will use and we justify that our assumptions imply
theirs.

To fit the framework of viscosity solutions (to ensure the existence
of continuous solutions when combined with Perron's method, but not
only), we will be using the Comparison Principle for both the
evolution equation~\eqref{eq:main} and for the perturbed stationary
equation~\eqref{eq:approx-ergo} introduced in the next
section. Comparison principle has been shown in \cite{bi08} to hold if
a series of assumptions (A1)-(A4) were satisfied. In our case (A1) in
\cite{bi08} comes from (M1), (M3), (M4) in the present paper; (A2) is
trivially satisfied; (A3-2) in \cite{bi08} comes from (F4) in the
present paper; (A4) in \cite{bi08} comes from (F3) in the present
paper.

Both for uniqueness of the solution for the ergodic problem
corresponding to equation ~\eqref{eq:ergo} and for establishing the
long time behavior of solutions of equation ~\eqref{eq:main} we will
be using Strong Comparison Principle of Lipschitz sub- and
supersolutions from \cite[Theorem~32]{c12}; (H) in \cite{c12} comes
from (F2), (F3) and (H-a)/(H-b) in the present paper. We would like to
point out that, from a rigorous point of view, (F3) does not yield (H)
in \cite{c12}. However, in view of the proof of this result, see the
very end of it, it is clear that (F3) is enough to conclude.

As mentioned in the introduction, we would like to deal with Lipschitz
solutions for the some of the equations that will appear in the
proof. Regularity of solutions for Eq.~\eqref{eq:tvd}, and in
particular Lipschitz estimates, follows from
\cite[Corollary~7]{bcci12}. Indeed, take $F_0 = \delta u + H(Du)$
which satisfies (H0) and (H2) from \cite{bcci12}; (H1) for $F_1$ and
$F_2$ in \cite{bcci12} follows from (F2) in the present paper; (H2) in
\cite{bcci12} follows from (F3) in the present paper; (H3) in
\cite{bcci12} follows from (F4) in the present paper; (J1)-(J5) in
\cite{bcci12} follows from (M1)-(M5) in the present paper. We also
need Lipschitz estimates given by \cite[Corollary~7]{bcci12} for
solutions of equation ~\eqref{eq:w-delta}in the sublinear case.
Choose $F_0 = \delta u$, and replace $F_1 (x,p,X,l)$ with
$F_1(p,X,l)+c^{-1}H(cp)$. Then $F_0$ trivially satisfies (H0) and (H2)
from \cite{bcci12}; (H1) for $F_1$ in \cite{bcci12} follows from (F3)
and (H-a) in the present paper; (M3)-(J4) in \cite{bcci12} follows
from (M3)-(M4) in the present paper.

Finally, to solve the ergodic problem in the sublinear case, we will
make usage of the Strong Maximum/Comparison Principle from \cite[Theorem~20]{c12}
for Eq.~\eqref{eq:homog}. This equation is degenerate elliptic and
nonlinearities are continuous, i.e. it satisfies (E) from \cite{c12};
moreover, it is $1$-homogeneous thanks to (F0) and (H-a); in
particular, it satisfies the scaling assumption (S) of
\cite{c12}. Thanks to (M5) and (F3), it also satisfies
$\mathrm{(N_{LI})}$.

\section{The stationary ergodic problem} \label{sec:ergodic}

In this section we discuss the solvability of the stationary ergodic
problem.  For the sake of simplicity we write below
\begin{multline*} F(x,Dv, D^2v, \mathcal{I}[v])\\=
 F_1(x_1,D_{x_1}v, D_{x_1x_1}^2v, \mathcal I_{x_1}[x,v])
+F_2(x_2,D_{x_2}v, D_{x_2x_2}^2v, \mathcal I_{x_2}[x,v]) \; .\end{multline*}

Our result is the following. 
\begin{theorem}\label{thm:ergo} Assume that
  (M1)-(M5), (F0)-(F4) with $k_i \leq \beta_i$, and either (H-a) or
    (H-b) holds.  There exists a unique constant $\lambda \in \R$ for
    which the stationary ergodic problem
 \begin{equation}\label{eq:ergo}
 F(x,Dv,D^2v, \mathcal{I}[v]) + H(Dv) = f(x) - \lambda.
\end{equation}
has a Lipschitz continuous periodic viscosity solution $v:\R^d \to
\R$. Moreover, $v$ is the unique Lipschitz continuous solution of
\eqref{eq:ergo}, up to an additive constant.
\end{theorem}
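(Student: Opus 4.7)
The plan is to follow the classical vanishing discount (small-$\delta$ perturbation) strategy introduced by Lions--Papanicolaou--Varadhan, adapted to the integro-differential setting and to handle both the sublinear (H-a) and superlinear (H-b) gradient regimes.

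\smallskip

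\textbf{Step 1: The approximate ergodic problem.} For each $\delta>0$, consider the penalized equation
\begin{equation}\label{eq:approx-ergo-plan}
\delta v_\delta + F(x, Dv_\delta, D^2 v_\delta, \mathcal I[v_\delta]) + H(Dv_\delta) = f(x) \quad \text{in } \R^d,
\end{equation}
with periodic boundary conditions. Existence of a unique bounded $\Z^d$-periodic continuous viscosity solution $v_\delta$ follows from Perron's method combined with the comparison principle from \cite{bi08} (applicable by the comments after assumption (F4): (M1), (M3), (M4), (F3), (F4) yield the hypotheses of \cite{bi08}). The constants $\pm \|f\|_\infty/\delta$ are trivial sub- and supersolutions, so by comparison $\|\delta v_\delta\|_\infty \le \|f\|_\infty$, giving a uniform $L^\infty$ bound on $\delta v_\delta$.

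\smallskip

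\textbf{Step 2: Uniform Lipschitz estimates on $v_\delta$.} This is where the two cases (H-a) and (H-b) diverge and where the main difficulty lies. Define $w_\delta(x) := v_\delta(x) - v_\delta(x_0)$ for some fixed $x_0$; by periodicity the oscillation of $v_\delta$ coincides with its oscillation on the torus. In the sublinear case (H-a), the Lipschitz estimate comes directly from \cite[Corollary~7]{bcci12}, applied as justified in the ``Comments on the list of assumptions'' (with $F_0 = \delta u$ and the Hamiltonian $H$ absorbed into $F_1$ via its $1$-homogeneous bound); the ellipticity combination of $\Lambda_1^i + \Lambda_2^i \ge \Lambda_0$ together with (M5) provides the required uniform ellipticity. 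In the superlinear case (H-b), the coercivity of $H$ itself must be exploited: the key point is the inequality \eqref{propH}, which allows one to linearize $H$ by considering $c^{-1} v_\delta(c\,\cdot)$-type rescalings, so that the superlinear term $\hat\eta c^{m-1}|p|^m$ dominates all lower order terms produced by the doubling-variable argument. In both cases one obtains a Lipschitz constant $L$ independent of $\delta$.

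\smallskip

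\textbf{Step 3: Passage to the limit.} The functions $w_\delta$ are equi-Lipschitz and periodic, hence uniformly bounded on the torus; by Arzelà--Ascoli and a diagonal extraction there is a sequence $\delta_n \to 0$ with $w_{\delta_n} \to v$ uniformly on $\R^d$ for some $L$-Lipschitz periodic $v$, and $-\delta_n v_{\delta_n}(x_0) \to \lambda$ for some $\lambda \in \R$. Rewriting \eqref{eq:approx-ergo-plan} for $w_\delta$ as
\[
\delta_n w_{\delta_n} + F(x, Dw_{\delta_n}, D^2 w_{\delta_n}, \mathcal I[w_{\delta_n}]) + H(Dw_{\delta_n}) = f(x) - \delta_n v_{\delta_n}(x_0),
\]
the standard stability of viscosity solutions under locally uniform convergence (valid for integro-differential equations by Lévy--Itô structure) yields that $(v,\lambda)$ solves \eqref{eq:ergo}.

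\smallskip

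\textbf{Step 4: Uniqueness.} The constant $\lambda$ is unique: if $(v_1,\lambda_1)$ and $(v_2,\lambda_2)$ were two solutions with, say, $\lambda_1 < \lambda_2$, then for $\delta$ small enough $v_1 + C$ would be a strict supersolution of the equation solved by $v_2$, producing a contradiction via the comparison principle of \cite{bi08} after a standard penalization. For the uniqueness of $v$ up to an additive constant, one applies the Strong Maximum Principle: in the sublinear case, \cite[Theorem~20]{c12} applies to \eqref{eq:ergo} because (F0) plus (H-a) make the equation $1$-homogeneous and (M5), (F3) yield the nondegeneracy condition $(\mathrm{N}_{\mathrm{LI}})$; in the superlinear case, \cite[Theorem~32]{c12} applies to the difference of two Lipschitz solutions. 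Either way, if $v_1, v_2$ are two Lipschitz solutions and $v_1 - v_2$ achieves its maximum, then $v_1 - v_2$ is constant.

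\smallskip

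The \emph{hard part} is Step 2 in the superlinear case: one has to show that the Lipschitz constant of $v_\delta$ is bounded independently of $\delta$, and this requires delicately balancing the superlinear coercivity of $H$ against the mixed local/nonlocal diffusion $F_1+F_2$ in a doubling-variables argument, using the growth condition $k_i \le \beta_i$ to control the terms $|p|^{k_i+\tau}$ arising from (F2).
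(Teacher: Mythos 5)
Your overall architecture (vanishing discount, compactness, stability, uniqueness via comparison and a strong maximum principle) matches the paper's, but Step 2 contains a genuine gap that hides the actual core of the proof. You assert that the normalized functions $v_\delta - v_\delta(x_0)$ are equi-Lipschitz ``directly'' from \cite[Corollary~7]{bcci12}; however, those Lipschitz estimates produce a constant depending on the $L^\infty$-norm of the solution of the equation to which they are applied, and the equation satisfied by $\tilde v^\delta := v_\delta - v_\delta(x_0)$ contains the zeroth-order term $\delta\tilde v^\delta$, which is not a priori under control. The paper therefore first proves, as a separate and explicitly ``crucial'' proposition, that $\|\tilde v^\delta\|_\infty$ is bounded uniformly in $\delta$, and only then invokes \cite{bcci12} to get equi-Lipschitz continuity. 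That uniform bound is obtained by contradiction: assuming $c_\delta:=\|\tilde v^\delta\|_\infty\to\infty$, one rescales $w^\delta=\tilde v^\delta/c_\delta$ (using the homogeneity (F0)), passes to the limit, and derives a contradiction with $w(0)=0$, $\|w\|_\infty=1$ — in the sublinear case because the limit solves the $1$-homogeneous equation $F+\overline H=0$ and the Strong Maximum Principle (after verifying $(\mathrm{N_{LI}})$ from (F2) and (M5)) forces $w\equiv\pm1$; in the superlinear case because dividing by $c_\delta^{m-1}$ and using \eqref{propH} forces $|Dw|^m\le 0$, hence $w$ constant, after an equicontinuity lemma for the $w^\delta$ proved by a doubling argument exploiting the coercivity of $H$. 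Your sketch of ``$c^{-1}v_\delta(c\,\cdot)$-type rescalings'' and ``dominating lower order terms'' gestures at this but does not identify the correct rescaling parameter ($c_\delta$ is the sup-norm of the normalized solution, not a doubling parameter) nor the contradiction mechanism; as written, your Step 2 presupposes the conclusion it is supposed to establish.

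Two smaller remarks. In Step 1, $\pm\|f\|_\infty/\delta$ are not sub/supersolutions in general since $F(x,0,0,0)$ and $H(0)$ need not vanish; one should use $\pm M/\delta$ with $M=\|F(\cdot,0,0,0)\|_\infty+|H(0)|+\|f\|_\infty$. In Step 4, your stationary-penalization argument for uniqueness of $\lambda$ is a viable alternative to the paper's cleaner route (comparing the evolution solutions $\lambda_i t+v_i$ and dividing by $t$), but for uniqueness of $v$ up to a constant the paper applies the Strong Comparison Principle \cite[Theorem~32]{c12} to the Lipschitz evolution solutions $u_1,u_2$ in \emph{both} regimes; \cite[Theorem~20]{c12} is used elsewhere (for the rescaled $1$-homogeneous equation in the boundedness proof), and applying a strong maximum principle to $v_1-v_2$ directly requires the linearization encoded in the strong comparison statement, which is where the Lipschitz continuity of $v_1,v_2$ is genuinely needed in the superlinear case.
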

\begin{proof}
For any $\delta >0$, we consider as in \cite{lpv} the following approximate
equation
\begin{equation}\label{eq:approx-ergo}
\delta v^\delta + F(x,Dv^\delta,D^2v^\delta,\mathcal I[v^\delta])
+H(Dv^\delta) = f(x) \text{ in } \R^d.
\end{equation}
If $ M = ||F(\cdot, 0, 0, 0)||_\infty + |H(0)| + ||f||_\infty$, we notice
that $-\delta^{-1}M$ and $\delta^{-1}M$ are respectively sub- and
supersolutions of the above approximated equation. Then it follows
from Perron's method for integro-differential equations as described
for instance in \cite{i05}, and from the comparison principle
(Theorem~3 of \cite{bi08}) that there exists a unique bounded
viscosity solution $v^\delta$ which satisfies
\begin{equation}\label{eq:unif-bound}
-\frac{M}\delta \leq v^\delta(x) \leq \frac{M}\delta, \quad \hbox{for all  }  
x\in\R^d.
\end{equation}
Assumptions (M1),(M3),(M4) and (F4) on $F_i$, $j_i$ and $\mu_i$
for $i=1,2$ imply that there is a comparison result for
\eqref{eq:approx-ergo}.

By the periodicity of $F$ and $f$, $v^\delta(\cdot)$ and $v^\delta(\cdot+z)$ are
 both solutions of the above problem, for any $z\in \Z^d$. Then the uniqueness
of the solution implies that they are equal;
hence, $v^\delta$ is $\Z^d$-periodic.

We next consider $\tilde{v}^\delta(\cdot) = v^\delta(\cdot) - v^\delta(0)$.
The following proposition states the uniform boundedness of this
sequence of normalized functions. It is the crucial technical part of the
analysis of the ergodic problem.

\begin{proposition}\label{prop:key}
The sequence $\{ \tilde{v}^\delta \}_\delta$ is uniformly bounded. 
\end{proposition}
The proof of the proposition is postponed.  We remark that
$\tilde{v}^\delta$ satisfies
\begin{equation}\label{eq:tvd}
 \delta \tilde{v}^\delta + F (x,D\tilde{v}^\delta,D^2\tilde{v}^\delta, \I
   [\tilde{v}^\delta]) + H(D\tilde{v}^\delta ) = 
f (x) - \delta v^\delta (0), \quad x \in \R^d.
\end{equation}
We derive from Proposition~\ref{prop:key} and results from
\cite{bcci12} that $\{\tilde{v}^\delta \}_\delta$ is also
equi-Lipschitz continuous (at this point, the whole set of
assumptions is required).  Therefore, we can use Ascoli's Theorem to
extract a subsequence $\tilde{v}^{\delta_n}$ which converges locally
uniformly (and therefore uniformly, because of the periodicity) to a
Lipschitz continuous $\Z^d$-periodic function $\uinf$.  On the other
hand, \eqref{eq:unif-bound} implies that $\delta_n v^{\delta_n} (0)$
is bounded; hence, up to extracting again a subsequence, we can
further assume that $\delta_n v^{\delta_n} (0) \to \lambda$. By the
(continuous) stability of viscosity solutions of integro-differential
equations, see e.g. \cite{bi08}, we conclude that $\uinf$ is a
solution of \eqref{eq:ergo}.

Next we consider two solutions $(\lambda_i,v_i)$, $i=1,2$, of the ergodic
problem
\eqref{eq:ergo}. Then $u_i (x,t) = \lambda_i t + v_i (x)$ are two
solutions of \eqref{eq:main} with the initial condition
\[ u_i(x,0) = v_i(x) \quad x \in \R^d.\]
From the comparison principle for
\eqref{eq:main}, we conclude that, for all $t>0$,
\[ u_i (x,t) \le u_j (x,t) + ||v_i - v_j||_\infty \; .\]
Dividing by $t>0$ and letting $t\to +\infty$, this implies $\lambda_i \le
\lambda_j$. Since $i$ and $j$ are arbitrary, we conclude that
$\lambda_1=\lambda_2$. 

Finally, thanks to (F3), (H-a)/(H-b) and the Lipschitz continuity of
$v_1$ and $v_2$, we can apply the Strong Comparison Principle from
\cite[Theorem~32]{c12} to the Lipschitz solutions $u_1$ and $u_2$ of
\eqref{eq:main} and conclude that $u_1-u_2 = v_1 -v_2$ is constant. It
is worth pointing out that this step uses in a crucial way the Lipschitz
continuity of $v_1$ and $v_2$ because of the linearization procedure
(but which has to be used only in the superlinear case). This completes
the proof of Theorem~\ref{thm:ergo}. 
\end{proof}

\begin{remark}
Solving the stationary ergodic problem is still possible for some
cases when $\beta<1$, as we will point out in some examples, see
Section~\ref{sec:ex}. As a matter of fact, the solutions $\delta
v^\delta$ would be H\"older continuous and the stationary ergodic
problem would have a H\"older continuous solution. In this case, if
the uniqueness of the ergodic constant remains true, it is not clear
anymore that the solutions of the ergodic problem are unique up to an
additive constant. However, it is worth pointing out that the
Lipschitz continuity of the solutions of the ergodic problem is needed
(in general) in order to prove the asymptotic result for the PIDE.
\end{remark} 
 We now turn to the proof of the proposition. We distinguish the
 sublinear and the superlinear case.
\begin{proof}[Proof of Proposition~\ref{prop:key} in the
    sublinear case]
We argue by contradiction: we assume that
we can find a subsequence, that we still denote by $(\tilde
v^\delta)_{\delta}$, for which the associated sequence of norms blows up, i.e.
$$c_\delta:=||\tilde v^\delta||_\infty\rightarrow \infty \quad \text{ as }
\delta\rightarrow 0.$$
We next consider
$$w^\delta(x) = \frac{\tilde v^\delta(x)}{c_\delta}$$
which,  by (F0), satisfies
\begin{equation}\label{eq:w-delta}
\delta w^\delta  +  F(x, D w^\delta, D^2w^\delta, \mathcal I[w^\delta]) +
\frac{1}{c_\delta}H( c_\delta D w^\delta)=\frac{ f(x)
-\delta v_\delta(0)}{c_\delta}.
\end{equation}
Since $||w^\delta||_\infty = 1$ for all $\delta>0$,
$\{w^\delta\}_\delta$ is equi-Lipschitz continuous by the results of
\cite{bcci12}. Hence, we can extract a locally uniformly converging
subsequence (hence globally by periodicity); we denote by $w$ the
limit which is a $\Z^d$-periodic, Lipschitz continuous function with
$||w||_\infty=1$.

By the (continuous) stability result for viscosity
solutions, see e.g. \cite{bi08}, Assumption~(H-a) implies that $w$ is
a solution of
\begin{equation}\label{eq:homog}
F(x, Dw, D^2w, \mathcal I[w]) + \overline H(Dw) = 0, \quad \hbox{in  }\R^d.
\end{equation}
The limiting equation \eqref{eq:homog} is $1$-homogeneous; in
particular, it satisfies the scaling assumption (S) of \cite{c12}. If
we can check that it satisfies $\mathrm{(N_{LI})}$ from \cite{c12},
then this will imply that the equation enjoys the Strong Maximum
Principle (cf. \cite[Theorem~22]{c12}).

This gives the contradiction: indeed, on one hand, we know that
$||w||_\infty= 1$ and, by the continuity and periodicity of $w$, its
maximum/minimum value is attained and equal to $\pm1$. Henceforth, by
the Strong Maximum Principle, the function must be constant equal to
$\pm1$.  On the other hand $w(0) = 0$ since $w^\delta(0)=0$ for all
$\delta$, which is the desired contradiction.

We now show that indeed, assumption~$\mathrm{(N_{LI})}$ of \cite{c12}
is satisfied, and that it results from from (F2) and (M5).  Fix
$R_0>0$; we must check that for all $R \in (0,R_0)$ and for all $c>0$
we have that
\begin{multline*}
 \mathcal A (\gamma):=\sum_{i=1,2} F_i\left(x_i,p_i, I-\gamma
 p_i\otimes p_i, \tilde C_\mu - c \gamma
 \int_{\tilde{\mathcal{C}}^i_{\eta_i,\gamma}(p_i)} |p_i\cdot
 j_i(x_i,z_i)|^2 d\mu(z_i)\right) \\ + \bar H(p) \rightarrow \infty,
\end{multline*} 
as $\gamma \to \infty$, uniformly for $x \in \R^d$, $p \in B_R \setminus
 B_{R/2}$. Here the constant $\tilde C_\mu$ is given by (M1) and the
 cone is slightly different from the one in (M5). Namely, it has the
 form
\[\tilde{\mathcal{C}}_{\eta,\gamma}(p)= \{z; \space
(1-\eta)|p||j(x,z)|\leq|p\cdot j(x,z)|\leq 1/\gamma\}.\] Going back to
the original form of the nonlinearity $F = F_1 + F_2 + H$ and using
the ellipticity - growth assumption (F2) we get the following
lower bound for $\mathcal{A} (\gamma)$, 
\begin{multline*}
 \mathcal A (\gamma) \ge -\sum_{i=1,2} \Big(\Lambda^i_1(x) \big(\tilde C_\mu -c\gamma\int_{\tilde{\mathcal{C}}^i_{\eta_i,\gamma}(p_i)} |p_i\cdot 
j_i(x_i,z_i)|^2d\mu(z_i)\big)
 + \Lambda^i_1(x) C_1^i|p_i|^{k_i} +\\
 \Lambda^i_2(x)\tr(I_{d_i}-\gamma p_i \otimes p_i)
 + \Lambda^i_2(x) C_2^i |p_i|^2 \Big)\\ 
 +  F_1(x_1,p_1,0,0) + F_2(x_2,p_2,0,0) + \bar H(p).
\end{multline*}
 Using now the structure of the cone, and noting that 
$ \mathcal{C}_{\eta_i, (R\gamma)^{-1}}(p) \subset
\tilde{\mathcal{C}}_{\eta,\gamma} (p)$, we get
\begin{multline*}
 \mathcal A (\gamma) \ge  -\sum_{i=1,2} \Big(\Lambda^i_1(x)
  \big(\tilde C_\mu - c\gamma(1-\eta_i)^2
|p_i|^2\int_{\mathcal{C}^i_{\eta,(R\gamma)^{-1}}(p_i)}
|j_i(x_i,z_i)|^2d\mu(z_i)\big)
 +\\
  \Lambda^i_2(x)(d_i -\gamma
|p_i|^2) \Big) + \mathcal O_\gamma(1).
\end{multline*}
Employing now the nondegeneracy assumption (M5)  and setting by $$C:=c C_\mu \min_{i=1,2}((1-\eta_i)^2 \eta_i^{\frac{d_i-1}2}
)R^{\beta_i-2}$$ we further have that 
\begin{align*}
\mathcal A (\gamma) \ge & 
\sum_{i=1,2} \big(\Lambda^i_1(x) C|p_i|^2 \gamma^{\beta_i-1}  +
\Lambda^i_2(x)|p_i|^2  \gamma \big) + \mathcal O_\gamma(1)\\
 \ge & 
\sum_{i=1,2} C \Lambda_0 |p_i|^2 \gamma^{\beta_i-1} + \mathcal
O_\gamma(1)\\
= & C \Lambda_0 |p|^2 \gamma^{\beta_i-1} + \mathcal O_\gamma(1)
\end{align*}
where $\mathcal O_\gamma(1)$ is bounded when $\gamma \to
\infty$. Therefore $ \mathcal A (\gamma) \rightarrow \infty$ as
$\gamma\rightarrow\infty$ uniformly in $x$, and $R/2<|p|<R$ and the
proof is now complete in the sublinear case.
\end{proof}
\begin{proof}[Proof of Proposition~\ref{prop:key} in the
    superlinear case.]  The beginning of the proof in the superlinear
  case goes along the same lines as in the sublinear case.  We assume
  that $c_{\delta}\to\infty$ along a subsequence and reach a
  contradiction.  For simplicity we still keep the notation $\delta$
  for the subsequence and assume (with no restriction) that
  $c_\delta\geq1$ for all $\delta>0$.  We consider as before the
  rescaled functions
\[w^\delta(x):=\frac{\tilde v^\delta(x)}{c_\delta},\] 
so that $\|w^\delta\|_\infty=1$ for any $\delta>0$. Rewriting the
equation with $w^\delta$ and using again the 1-positive homogeneity of
nonlinearities, cf. (F0), we see that $w^\delta$ still satisfies
\eqref{eq:w-delta}. In particular, we get from (H-b) (and more precisely from (\ref{propH})) that
\begin{multline*}
 \hat \eta|Dw^\delta|^m \leq - \frac{1}{c_\delta^{m-1}} (H (Dw^\delta)-(\hat\eta)^{-1})
  -\frac{1}{c_\delta^{m-1}}F(x, Dw^\delta, D^2w^\delta,  \I
  [x,w^\delta]) \\
  -\frac{\delta w^\delta}{c_\delta^{m-1}}+\frac{f(x)-\delta
v^\delta(0)}{c_\delta^m}.
\end{multline*}
We next claim that the following holds true.
\begin{lemma}\label{lemma:unif_cont}
    The family $\{w^\delta\}_{\delta>0}$ is equicontinuous in $\R^d$.
\end{lemma}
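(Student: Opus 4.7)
The plan is to derive a uniform H\"older-type modulus of continuity for the family $\{w^\delta\}$ by a doubling-of-variables argument driven by the superlinearity of $H$ via inequality (\ref{propH}). The equi-Lipschitz estimates from \cite{bcci12} used in the sublinear case do not transfer uniformly in $\delta$ here because of the degenerating prefactor $c_\delta^{-1}$ in front of $H(c_\delta\cdot)$ in \eqref{eq:w-delta}; the regularity has to be extracted instead from the Hamiltonian itself, precisely through the pointwise viscosity bound on $|Dw^\delta|^m$ displayed just before the statement.

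Fix $\alpha\in(0,1)$ (to be chosen) and for each $L>0$ consider
\[
M_\delta(L)=\sup_{x,y\in\R^d}\bigl(w^\delta(x)-w^\delta(y)-L|x-y|^\alpha\bigr).
\]
I would show that $M_\delta(L)\le 0$ for $L$ large enough, uniformly in $\delta$; this immediately yields equicontinuity with modulus $Ls^\alpha$. Assume by contradiction that $M_\delta(L)>0$: by periodicity the supremum is attained at some $(\bar x,\bar y)$ with $\bar x\ne\bar y$, and $\|w^\delta\|_\infty=1$ forces $L|\bar x-\bar y|^\alpha\le 2$. The nonlocal Jensen--Ishii Lemma of \cite{bi08}, applied to the test function $\phi(x,y)=L|x-y|^\alpha$, furnishes a common test gradient $p=\alpha L|\bar x-\bar y|^{\alpha-1}\hat n$, matrices $X,Y$ satisfying \eqref{eq::matrix_ineq} (in particular $\tr(X-Y)\le 0$), and nonlocal values with $I_1\le I_2$, together with the sub- and supersolution inequalities for $w^\delta$ at $\bar x$ and $\bar y$.

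I would then combine the subsolution inequality at $\bar x$ with (\ref{propH}) (at $c=c_\delta\ge 1$ and with $p=p$) to isolate the term $\hat\eta\,c_\delta^{m-1}|p|^m$, and use the supersolution inequality at $\bar y$ together with (F2) (exploiting $\tr(X-Y)\le 0$ and $I_1\le I_2$) to bound $-F(\bar x,p,X,I_1)$ from above. After the dust settles, one obtains an estimate of the form
\[
\hat\eta\,c_\delta^{m-1}|p|^m \le C_0 + C\bigl(|p|^{k}+|p|^{2+\tau}|\bar x-\bar y|^\tau\bigr),
\]
with $k=\max(k_1,k_2)\le\max(\beta_1,\beta_2)<2$ and constants $C_0,C$ independent of $\delta$ and $L$. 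Substituting $|\bar x-\bar y|=(|p|/(\alpha L))^{1/(\alpha-1)}$ from the definition of $p$, the right-hand side becomes a polynomial in $|p|$ whose effective exponent can be made strictly smaller than $m$ by choosing $\alpha\in(0,1)$ close enough to $1$---which is possible precisely because $m>1$, $k\le\beta_i<2$ and $\tau\le 1$. This yields a uniform bound $|p|\le C_*$; but $L|\bar x-\bar y|^\alpha\le 2$ combined with $|p|=\alpha L|\bar x-\bar y|^{\alpha-1}$ gives $|p|\ge c\,L^{1/\alpha}$, which contradicts $|p|\le C_*$ once $L$ is large enough. Hence $M_\delta(L)\le 0$ for such $L$, proving the equicontinuity.

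The main technical obstacle I anticipate is the algebraic bookkeeping of the growth exponents in (F2), most notably the Hessian-generated term $|p|^{2+\tau}|\bar x-\bar y|^\tau$ whose absorption via the factor $|\bar x-\bar y|^\tau$ is exactly what forces the modulus to be H\"older ($\alpha<1$) rather than Lipschitz; the nonlocal ordering $I_1\le I_2$ (up to vanishing error) is by now standard via the integro-differential version of Ishii's Lemma in \cite{bi08}.
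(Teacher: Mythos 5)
Your overall strategy (doubling of variables with a H\"older penalization, extracting the modulus from the superlinearity of $H$ rather than from ellipticity) is the right one, and your treatment of the second-order and nonlocal terms is essentially parallel to the paper's. But there is a fatal gap at the central step: you double variables between $w^\delta$ and itself, so the nonlocal Jensen--Ishii lemma gives the \emph{same} test gradient $p$ at $\bar x$ and $\bar y$, and hence the sub- and supersolution inequalities for \eqref{eq:w-delta} both carry the \emph{same} Hamiltonian contribution $c_\delta^{-1}H(c_\delta p)$. When you subtract them, these terms cancel identically and no quantity of order $c_\delta^{m-1}|p|^m$ survives. Your attempt to salvage this by applying \eqref{propH} only in the subsolution inequality is circular: \eqref{propH} gives $c_\delta^{-1}H(c_\delta p)\ge H(p)+\hat\eta\,c_\delta^{m-1}|p|^m-\hat\eta^{-1}$, but the upper bound you must then produce for $-F(\bar x,p,X,I_1)$ via the supersolution inequality at $\bar y$ reintroduces $+\,c_\delta^{-1}H(c_\delta p)$ on the right-hand side, which dominates the very term you isolated (there is no ``scale one'' Hamiltonian anywhere in the two inequalities to compare against). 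The key estimate $\hat\eta\,c_\delta^{m-1}|p|^m\le C_0+C(|p|^{k}+|p|^{2+\tau}|\bar x-\bar y|^\tau)$ that drives your contradiction is therefore not obtainable by this route.

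The paper's proof avoids this by an \emph{asymmetric} doubling: it maximizes $\mu w^\delta(x)-w^\delta(y)-L|x-y|^\alpha$ for a parameter $\mu\in(0,1)$. Since $\mu w^\delta$ satisfies \eqref{eq:w-delta} with $H$ rescaled at level $c_\delta/\mu$ instead of $c_\delta$, the difference of the two Hamiltonian terms becomes $(c_\delta/\mu)^{-1}H((c_\delta/\mu)P)-c_\delta^{-1}H(c_\delta P)$, which (H-b) bounds from below by $\eta(1-\mu)c_\delta^{m-1}|P|^m$; this is where the superlinearity genuinely enters. The price is that the conclusion is only $w^\delta(x)-w^\delta(y)\le L(\mu)|x-y|^\alpha+2(1-\mu)$ (one must also handle the case $\bar x=\bar y$, which produces the $2(1-\mu)$ error), weaker than the clean H\"older bound you claim but still a $\delta$-independent modulus of continuity upon letting $\mu\to1$. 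To repair your argument you must build in this asymmetry from the start; the rest of your bookkeeping (the lower bound $|P|\gtrsim L^{1/\alpha}$ from $L|\bar x-\bar y|^\alpha\le 2$, and the control of the higher-order terms) then goes through as in the paper.
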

The proof of the lemma is postponed and we complete the proof of the
proposition. Since $\|w_\delta\|_\infty \le 1$, using Ascoli's
Theorem, we can extract a subsequence which converges locally
uniformly (hence globally) towards a continuous $\Z^d$-periodic
function $w$. Using standard stability results for viscosity solutions
together with Estimate~\eqref{eq:unif-bound}, $c_\delta \to \infty$
and (F4), we get
\[\hat \eta|Dw|^m \leq0\,.\]
From this we deduce that $Dw=0$ in the viscosity sense. But there we get a
contradiction from the continuity of $w$ since $w(0)=0$ and
$\|w\|_\infty=1$.  The proof of the proposition is now complete.
\end{proof}
\begin{proof}[Proof of Lemma \ref{lemma:unif_cont}]
We first fix a parameter $\alpha\in(0,1)$ and claim that
\begin{claim}
  For any $\mu\in(0,1)$, there exists a (large) constant $L(\mu)>0$
  such that for any $x,y\in \R^d$, we have
\[w^\delta(x)-w^\delta(y)\leq L(\mu)|x-y|^\alpha+2(1-\mu)\,.\]
\end{claim}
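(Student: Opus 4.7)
The plan is to combine the ``$\mu$-trick'' with the doubling-of-variables method adapted to nonlocal equations (cf.\ \cite{bi08}). We may assume $\mu\in[\mu_0,1)$ in view of (H-b), since for $\mu<\mu_0$ the desired bound follows at once from the one at $\mu_0$ (as $2(1-\mu_0)\le 2(1-\mu)$). Fix $L>0$ to be chosen large and introduce
\[
\Phi(x,y) := w^\delta(x) - \mu w^\delta(y) - L|x-y|^\alpha, \qquad (x,y)\in\R^d\times\R^d.
\]
By periodicity and $\|w^\delta\|_\infty\le 1$, $\Phi$ is continuous and attains its supremum at some $(\bar x,\bar y)$. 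If $\bar x=\bar y$, then $\sup\Phi=(1-\mu)w^\delta(\bar x)\le 1-\mu$, so $w^\delta(x)-\mu w^\delta(y)\le L|x-y|^\alpha+(1-\mu)$, and adding $(\mu-1)w^\delta(y)\le 1-\mu$ to both sides gives the claim. The entire task is therefore to make the case $\bar x\ne\bar y$ impossible by choosing $L=L(\mu)$ large enough.

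Assume for contradiction that $\bar x\ne\bar y$. Applying the nonlocal Crandall-Ishii-Jensen lemma from \cite{bi08} to the penalty $L|x-y|^\alpha$ at $(\bar x,\bar y)$ yields the gradient $p:=L\alpha|\bar x-\bar y|^{\alpha-2}(\bar x-\bar y)$, matrices $X,Y\in\S_d$ satisfying (\ref{eq::matrix_ineq}), and a splitting of the nonlocal terms into small jumps (controlled by $X,Y$) and large jumps (controlled by $\Phi(\bar x,\bar y)\ge\Phi(\bar x+j,\bar y+j)$). Writing the subsolution inequality for $w^\delta$ at $\bar x$ with data $(p,X,\I_1)$ and the supersolution one at $\bar y$ with data $(p/\mu,Y/\mu,\I_2')$, multiplying the latter by $\mu$ and using $1$-homogeneity (F0), namely $\mu F(\bar y,p/\mu,Y/\mu,\I_2')=F(\bar y,p,Y,\mu\I_2')$, subtraction yields
\[
F(\bar x,p,X,\I_1) - F(\bar y,p,Y,\mu\I_2') + \frac{1}{c_\delta}\bigl[H(c_\delta p)-\mu H(c_\delta p/\mu)\bigr] \le C(1-\mu)+2\delta,
\]
where the right-hand side is bounded using $\|w^\delta\|_\infty\le 1$, $c_\delta\ge 1$ and (\ref{eq:unif-bound}).

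The superlinearity now produces the decisive gain: by (\ref{propH}) (a consequence of (H-b)) and $c_\delta\ge 1$,
\[
\frac{1}{c_\delta}\bigl[\mu H(c_\delta p/\mu)-H(c_\delta p)\bigr] \ge \hat\eta(1-\mu)|p|^m - \hat\eta^{-1},
\]
so a coercive term of size $(1-\mu)|p|^m$ appears on the left of the inequality. On the other hand, (F2), the matrix inequality (\ref{eq::matrix_ineq}) and the nondegeneracy (M5) used exactly as in the $\mathrm{(N_{LI})}$ computation of the sublinear case give a lower bound of the form $F(\bar x,p,X,\I_1)-F(\bar y,p,Y,\mu\I_2')\ge -C(1+|p|^{k+\tau}+|p|^{2+\tau}+|p|^{\beta_i})$. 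Since $m>1$, $k_i\le \beta_i$, and $\alpha$ can be chosen sufficiently close to $1$ (so that $|p|=L\alpha|\bar x-\bar y|^{\alpha-1}$ is arbitrarily large with $L$), the $|p|^m$ term strictly dominates these polynomial growths, and for $L=L(\mu)$ large enough the displayed inequality is violated, forcing $\bar x=\bar y$.

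The hardest step is the lower bound on the nonlocal difference $F(\bar x,p,X,\I_1)-F(\bar y,p,Y,\mu\I_2')$: the cutoff separating small and large jumps has to be optimized so that the resulting power of $|p|$ stays strictly below $m$, which requires a careful use of the structural assumptions (M1)-(M5) on the jump data and (F2) on the growth of $F$. This is precisely the type of estimate carried out in \cite{bcci12}; combined with the standard Ishii-Lions bound on $\tr(X-Y)$ it yields a bound of polynomial order in $|p|$ strictly less than $m$, so that the superlinear gain $(1-\mu)|p|^m$ indeed has the last word.
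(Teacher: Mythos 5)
Your overall skeleton matches the paper's: the $\mu$-doubling with penalty $L|x-y|^\alpha$, the dichotomy $\bar x=\bar y$ versus $\bar x\ne\bar y$, the nonlocal Jensen--Ishii lemma combined with the $1$-homogeneity (F0), and the superlinear gain $\eta(1-\mu)c_\delta^{m-1}|P|^m$ extracted from (H-b) after checking $|c_\delta P|\ge r_0$. The reduction to $\mu\ge\mu_0$ is a legitimate (and welcome) remark that the paper leaves implicit.

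However, the decisive step --- the control of $T_{\mathrm{h.o.t.}}=F(\bar y,P,Y,\I[\bar y,w^\delta])-F(\bar x,P,X,\I[\bar x,\mu w^\delta])$ --- is handled incorrectly. You bound it from below by $-C(1+|p|^{k+\tau}+|p|^{2+\tau}+|p|^{\beta_i})$ via (F2) and (M5) ``as in the $\mathrm{(N_{LI})}$ computation'' and then assert that $|p|^m$ dominates because $m>1$. That dominance is false in general: nothing in the hypotheses forces $m>2+\tau$, or even $m>2$ (take $m=3/2$), so the $|p|^2$ and $|p|^{2+\tau}$ terms coming from (F2) can swamp the superlinear gain, and choosing $\alpha$ close to $1$ does not change the relative powers of $|p|$. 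The paper avoids this entirely: it does \emph{not} use the ellipticity/growth structure (F2) at this stage. Instead it uses (F4) with $Z=I$, whose argument $\eps^{-1}|\bar x-\bar y|^2+|\bar x-\bar y|=L|\bar x-\bar y|^\alpha+|\bar x-\bar y|$ is bounded by $2+(2/L)^{1/\alpha}$ thanks to the maximum-point estimate $L|\bar x-\bar y|^\alpha\le 2$, together with the Lipschitz continuity (F3) in $(p,X,l)$ and the key Lemma~\ref{lem:hot} (Corollary~16 of \cite{bcci12}), which bounds $\I[\bar x,\mu w^\delta]-\I[\bar y,w^\delta]$ by a pure constant $\tilde c$ because $\|\mu w^\delta\|_\infty,\|w^\delta\|_\infty\le 1$. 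Hence $T_{\mathrm{h.o.t.}}$ is bounded by a constant \emph{independent of $L$ and of $|P|$}, and the gain $(1-\mu)c_0L^{m/\alpha}$ then beats a constant for $L$ large. Your ``optimize the cutoff so the power of $|p|$ stays below $m$'' cannot substitute for this: to repair your argument you must replace the (F2)/(M5) estimate by the (F3)--(F4)--Lemma~\ref{lem:hot} constant bound.
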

It is classical that such a result implies that $w^\delta$ is uniformly
continuous and that the modulus of continuity only depends on $L$ and
$\alpha$. To prove the claim, we consider the function
\[\Phi(x,y):=\mu w^\delta(x)-w^\delta(y)-L|x-y|^\alpha\,.\]
Since $w^\delta$ is $\Z^d$-periodic, this function reaches its maximum
at some point $(\bar x,\bar y)$ that we may consider in the same
cell. If this maximum is nonpositive, then we are done since this
means that $w^\delta(x)-w^\delta(y)\leq L|x-y|^\alpha$ for any $x,y\in
\R^d$.

Otherwise there are two options: $(i)$ either $\bar{x}=\bar{y}$; $(ii)$ or
$\bar{x}\neq\bar{y}$.

In case $(i)$ we have for any $x,y$
\[\mu w^\delta(x)-w^\delta(y)-L|x-y|^\alpha\leq (\mu-1)w^\delta(\bar x)\,,\]
which implies, together with $\|w^\delta\|_\infty\leq1$, that
\[w^\delta(x)-w^\delta(y)\leq L|x-y|^\alpha + {2}(1-\mu)\,,\]
and the claim holds. 

In case $(ii)$, we can use the viscosity inequalities since the
functions $L|\cdot-\bar y|^\alpha$ and $L|\bar x-\cdot|^\alpha$ are
smooth near $\bar x$ and $\bar y$ respectively.  Notice that
$\max\Phi\geq\Phi(\bar x,\bar x)={(\mu-1)}w^\delta(\bar x)$ which
gives the estimate
\begin{equation}\label{est:max.point.1}
  |\bar x-\bar y|\leq \Big(\frac{2}{L}\Big)^{1/\alpha}\,.
\end{equation}

The equation for $\mu w^\delta$ can be derived from \eqref{eq:w-delta}
\[
   F(\bar x, D (\mu w^\delta), D^2 (\mu w^\delta),
   \I [\bar x, \mu w^\delta]) +
  \frac{H\big((c_{\delta}/\mu) D(\mu \tilde w^\delta)\big)}{c_\delta/\mu}
  +\delta \mu w^\delta=\frac{f(x)-{\delta} v^\delta(0)}{c_\delta/\mu}\,.
\]
We use the nonlocal version of Jensen-Ishii's Lemma (cf. \cite{bi08}) and
computations from \cite[p.14]{bci11} (see Step~2 of the proof of
Theorem~3.1 of \cite{bci11}) in order to get for all $\delta>0$ two
matrices $X,Y \in \S_d$ such that
\begin{align*}
  F (\bar x, P, X, \I [\bar x, \mu w^\delta]) +
  \frac{H\big((c_\delta/\mu)P \big)}{c_\delta/\mu}+\delta\mu
  w^\delta(\bar x) &\leq \frac{f(\bar x)-\delta
    v^\delta(0)}{c_\delta/\mu} \\ F (\bar y, P, Y, \I [\bar y,
    w^\delta] ) +\frac{H\big(c_\delta P
    \big)}{c_\delta}+\delta w^\delta(\bar y) &\geq \frac{f(\bar
    y)-\delta v^\delta(0)}{c_\delta}
\end{align*}
where 
\[P=\alpha L|\bar x -\bar y|^{\alpha-2}(\bar{x} - \bar{y})\]
and $X,Y \in \S_d$ satisfy
\[
\begin{bmatrix} X & 0 \\ 0 & -Y \end{bmatrix} \le
\frac2{\bar \eps} 
\begin{bmatrix} Z & -Z \\ -Z & Z \end{bmatrix} 
\le \frac6{\bar \eps} 
\begin{bmatrix} I & -I \\ -I & I \end{bmatrix} 
\]
where $Z=I - (1+\varpi) \hat a \otimes \hat a$ with $a= \bar x -
\bar y$ and $\bar \eps = (L \alpha)^{-1} |\bar x - \bar y|^{2-\alpha}$
and $\varpi \in (0,1/3)$.
Now we subtract both inequalities and we obtain
\begin{equation}\label{eq:start}
  \frac{H\big((c_\delta/\mu)P\big)}{c_\delta/\mu} -
  \frac{H\big(c_\delta P\big)}{c_\delta} \le (1+\mu) \frac{
    \|f\|_\infty + M}{c_\delta} + (1+\mu)\delta + T_{\mathrm{h.o.t.}}
\end{equation}
where 
\[
T_{\mathrm{h.o.t.}} = F (\bar y, P, Y, \I [\bar y,
w^\delta]) - F (\bar x, P, X, \I [\bar x, \mu
w^\delta]).
\]
Using computations from \cite{bcci12}, we get the following estimate
of the higher order terms.
\begin{lemma}\label{lem:hot}
There exists $\tilde{c}$ depending on constants appearing in (M1)-(M5)
such that
\[\I [\bar x, \mu w^\delta] -\I [\bar y, w^\delta] \le \tilde{c}.\]
\end{lemma}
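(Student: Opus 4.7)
The plan is to estimate the difference $\I[\bar x,\mu w^\delta]-\I[\bar y,w^\delta]$ by combining the maximum point inequality
\[\Phi(\bar x+a,\bar y+b)\leq \Phi(\bar x,\bar y)\qquad\text{with}\qquad a=j_i(\bar x,z),\ b=j_i(\bar y,z)\]
with a near-far splitting of each Lévy--Itô integral. The resulting bound will depend on $L$, $\alpha$, $\mu$ and the constants in (M1)--(M5), but crucially not on $\delta$.

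For each $i=1,2$ I would decompose the integration domain into a small ball $B^{d_i}_r$ and its complement, with a radius $r$ to be chosen (roughly $r\asymp|\bar x-\bar y|$). On the inner part, the maximum point inequality, after subtracting the first-order drift corrections $D\psi\cdot j_i\,\one_{B^{d_i}}$ associated to the smooth test function $\psi(x,y)=L|x-y|^\alpha$ from the definition of $\I_i$, yields an integrand dominated by the second-order Taylor remainder of $\psi$ along the displacement $(j_i(\bar x,z),j_i(\bar y,z))$. Since $\bar x\neq\bar y$ in case $(ii)$, $\psi$ is smooth near $(\bar x,\bar y)$ and this remainder is bounded, via (M3), by $CL|\bar x-\bar y|^{\alpha-2}|z|^2$, which is integrable against $d\mu_i$ near the origin by (M1) and whose integral scales like $r^{2-\beta_i}$ thanks to (M2). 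On the outer part, the bound $\|w^\delta\|_\infty\leq 1$ directly controls the zeroth-order contribution, while the drift correction is absorbed using (M4) to estimate $|j_i(\bar x,z)-j_i(\bar y,z)|$, combined with (M2) and the explicit formula $|D_x\psi(\bar x,\bar y)|=\alpha L|\bar x-\bar y|^{\alpha-1}$.

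Collecting both contributions produces a bound of the form $CL|\bar x-\bar y|^{\alpha-\beta_i}+C'$ for $i=1,2$. Since \eqref{est:max.point.1} forces $|\bar x-\bar y|\leq(2/L)^{1/\alpha}$, this quantity is controlled by a constant depending only on $L$, $\alpha$, $\mu$ and the constants in (M1)--(M5). Finally, using the linearity identity $\I[\bar x,\mu w^\delta]=\mu\,\I[\bar x,w^\delta]$, one writes
\[\mu\,\I[\bar x,w^\delta]-\I[\bar y,w^\delta]=\mu\bigl(\I[\bar x,w^\delta]-\I[\bar y,w^\delta]\bigr)-(1-\mu)\I[\bar y,w^\delta],\]
and the residual term is itself bounded via $\|w^\delta\|_\infty\leq 1$ together with (M1).

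The hard point is not the splitting itself, which is standard, but the simultaneous singularity of the test function $L|x-y|^\alpha$ at the diagonal and of the Lévy measure at zero: the choice $r\asymp|\bar x-\bar y|$ is what balances the two, and (M2)--(M4) are what keep the bookkeeping sharp. This is precisely the type of near-far estimate developed in \cite{bcci12}, so in practice I would appeal to those computations after verifying they apply in our maximisation context.
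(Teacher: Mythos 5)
Your overall strategy --- the near/far splitting of the L\'evy--It\^o integrals at the doubled maximum point, which is exactly the content of \cite[Corollary~16]{bcci12} that the paper simply invokes --- is the right one, but your sketch does not close because of a sign error in the exponents. With your splitting radius $r\asymp|\bar x-\bar y|$, the inner contribution you obtain is $CL|\bar x-\bar y|^{\alpha-2}\,r^{2-\beta_i}\asymp CL|\bar x-\bar y|^{\alpha-\beta_i}$, and since $\alpha<1<\beta_i$ the exponent $\alpha-\beta_i$ is negative: the inequality \eqref{est:max.point.1} is an \emph{upper} bound on $|\bar x-\bar y|$ and therefore yields a \emph{lower} bound on $|\bar x-\bar y|^{\alpha-\beta_i}$, not an upper one. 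Nothing in case $(ii)$ prevents $|\bar x-\bar y|$ from being arbitrarily small, so $CL|\bar x-\bar y|^{\alpha-\beta_i}+C'$ is not dominated by any constant. The missing idea is the near-diagonal cancellation. Because $\psi(x,y)=L|x-y|^\alpha$ depends only on $x-y$ (so $D_y\psi=-D_x\psi$), the combined increment of the test function under the jumps $a=j_i(\bar x_i,z_i)$, $b=j_i(\bar y_i,z_i)$, once the two drift corrections are subtracted, is a Taylor remainder in the \emph{relative} displacement $a-b$, which by (M4) is $O(|z_i|\,|\bar x-\bar y|)$. Expanding to second order for $|z_i|$ below a \emph{fixed} radius of order $1/\tilde C_0$ (not of order $|\bar x-\bar y|$) gives a remainder $O\big(L|\bar x-\bar y|^{\alpha-2}\,|z_i|^2|\bar x-\bar y|^2\big)=O\big(L|\bar x-\bar y|^{\alpha}|z_i|^2\big)$; integrating with (M1)--(M2) and using $L|\bar x-\bar y|^{\alpha}\le 2$ from \eqref{est:max.point.1} then produces a genuine constant $\tilde c$ depending only on the constants of (M1)--(M5), which is what the lemma asserts and what the subsequent choice ``$L$ large'' requires.

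A second gap lies in your final reduction via linearity: writing $\I[\bar x,\mu w^\delta]-\I[\bar y,w^\delta]=\mu\big(\I[\bar x,w^\delta]-\I[\bar y,w^\delta]\big)-(1-\mu)\I[\bar y,w^\delta]$ forces you to bound $\I[\bar y,w^\delta]$ by itself, and this cannot be done from $\|w^\delta\|_\infty\le1$ and (M1) alone: for $\beta_i>1$ and $w^\delta$ merely Lipschitz the integral is not absolutely convergent near $z=0$, and in the viscosity formulation its near part must again be evaluated on the test function, reintroducing the unbounded factor $L|\bar x-\bar y|^{\alpha-2}$. The estimate has to be carried out on the paired quantity $\I[\bar x,\mu w^\delta]-\I[\bar y,w^\delta]$ directly, which is precisely what the maximum-point inequality provides and what \cite[Corollary~16]{bcci12} (applied in the paper with $u=\mu w^\delta$, $v=w^\delta$) delivers.
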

\begin{proof}
We use \cite[Corollary~16]{bcci12} with $u=\mu w^\delta$ and
$v=w^\delta$, $t_0=2\sqrt{d}$, $\gamma=1$. We then get the desired
estimate with $\tilde{c}= 2 (C^2_\mu+C^3_\mu)$ where $C^2_\mu$ and
$C^3_\mu$ depend on constants appearing in (M1)-(M5). For the precise
estimate of $\tilde{c}$ given above, see the proof of
\cite[Corollary~16]{bcci12}. The proof of Lemma~\ref{lem:hot} is
now complete.
\end{proof}
Combining (F3)-(F4) with Lemma~\ref{lem:hot}, we thus get
\[ T_{\mathrm{h.o.t.}} \le \omega_F (\eps^{-1} |\bar x -\bar y|^2 + |\bar x -
\bar y|) + C_F \tilde{c}\] 
where $C_F$ is the Lipschitz constant coming from (F3) and $\eps =
L^{-1} |\bar x -\bar y|^{2-\alpha}$. We therefore have
\[ T_{\mathrm{h.o.t.}} \le \omega_F (L|\bar x -\bar y|^\alpha + |\bar x -
\bar y|) + C_F \tilde{c} \leq \omega_F(2 + (2/L)^{1/\alpha}) +
C_F \tilde{c}\]

We now turn to first-order terms. We first notice that
(\ref{est:max.point.1}) yields
$$ |P|\geq 2^{\frac{\alpha-1}{\alpha}}\alpha L^{1/\alpha}\; ,$$ and
since we may assume without loss of generality that $c_\delta \geq 1$,
we have $|c_\delta P|\geq r_0$ for $L$ large enough. Therefore we can
use (H-b) which yields
\[\frac{H\big((c_\delta/\mu)P\big)}{c_\delta/\mu} -\frac{H\big(c_\delta P\big)}{c_\delta} \geq  \eta(1-\mu) c_\delta^{m-1} |P|^m\,.\]
Hence \eqref{eq:start} together with Lemma~\ref{lem:hot} yields, for $\delta$ small enough
\[\eta (1-\mu) c_\delta^{m-1} |P|^m  \leq  2(
    \|f\|_\infty + M +1) +  \omega_F(2 + (2/L)^{1/\alpha}) +
C_F \tilde{c}=:\hat c\; . \] Now, using the above estimate of $P$ we have
\[
  \eta c_\delta^{m-1}|P|^m  \geq c_0 c_\delta^{m-1}{}L^{m/\alpha}
\]
where $c_0= \eta \alpha^m 2^{m(\alpha-1/\alpha)}$. The above
inequality then becomes
\[c_0(1-\mu) \le c_0 c_\delta^{m-1} \leq \hat{c} L^{-m/\alpha}.\]
Hence, we reach a contradiction by choosing $L$ large enough
(depending on $\mu$).  The proof of the claim (and thus of the lemma)
is now complete.
\end{proof}

\section{Large Time Behavior}\label{sec:longtime}

This section is devoted to the proof of the following theorem. 
\begin{theorem}\label{thm:conv}
 For every $\Z^d$-periodic,  Lipschitz continuous initial data $u_0$,   
 the unique $\Z^d$-periodic solution $u \in C(\R^d\times (0,+\infty) )$ of
   \eqref{eq:main}-\eqref{eq:ic} satisfies 
\[
u(x,t)- \lambda t - v(x) \rightarrow 0, \quad \text{ as }
t\rightarrow\infty
\]
where $(\lambda,v)$ is a solution of the stationary ergodic problem
\eqref{eq:ergo}.
\end{theorem}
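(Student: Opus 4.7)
Plan. Set $\eta(x,t) := u(x,t) - \lambda t$, so that $\eta$ solves the evolution equation
\[ \partial_t \eta + F(x, D\eta, D^2\eta, \mathcal{I}[\eta]) + H(D\eta) = f(x) - \lambda, \]
and $v$, viewed as time-independent, is a stationary solution of the same equation. Applying the comparison principle of \cite{bi08} to $\eta$ and to the constant time shifts $v \pm \|u_0 - v\|_\infty$ shows that $\eta - v$ is uniformly bounded. Moreover, comparing $\eta(\cdot,\cdot+s)$ with $\eta(\cdot,\cdot)$ for $s > 0$ shows that
\[ M(t) := \sup_{x\in\R^d}\bigl(\eta(x,t) - v(x)\bigr), \qquad m(t) := \inf_{x\in\R^d}\bigl(\eta(x,t) - v(x)\bigr) \]
are respectively non-increasing and non-decreasing, so the limits $M^\infty, m^\infty$ exist with $m^\infty \le M^\infty$. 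The whole theorem is equivalent to proving $M^\infty = m^\infty$: indeed, once this is established, $\eta(\cdot,t) - v \to M^\infty$ uniformly, and by Theorem~\ref{thm:ergo} the function $\tilde v := v + M^\infty$ is again a solution of the ergodic problem, so $u(x,t) - \lambda t - \tilde v(x) \to 0$ uniformly.

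To establish equality of the two extremal limits, first observe that $\{\eta(\cdot,\cdot)\}$ is equi-continuous on $\R^d\times[0,+\infty)$: equi-Lipschitz regularity in $x$ (uniformly in $t$) is provided by \cite{bcci12}, while Lipschitz regularity in $t$ is a classical consequence of the Lipschitz assumption on $u_0$, using $u_0(x) \pm K t$ as sub/super-solutions for $K$ large and then translating in time via comparison. Fix a sequence $t_n \to +\infty$. By Ascoli's theorem combined with the $\Z^d$-periodicity, we may extract a subsequence (still denoted $t_n$) so that $\eta_n(x,t) := \eta(x, t + t_n)$ converges locally uniformly on $\R^d \times \R$ to a Lipschitz continuous, $\Z^d$-periodic function $\eta^\infty(x,t)$. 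By the stability of viscosity solutions of integro-differential equations \cite{bi08}, $\eta^\infty$ solves the evolution equation on all of $\R^d \times \R$, and the monotonicity of $M$ and $m$ yields
\[ \sup_{x}(\eta^\infty(x,t) - v(x)) = M^\infty, \qquad \inf_{x}(\eta^\infty(x,t) - v(x)) = m^\infty \quad \text{for every } t\in\R, \]
these extrema being attained by periodicity.

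The decisive step is to compare on $\R^d \times \R$ the two Lipschitz viscosity solutions $\eta^\infty$ and $(x,t) \mapsto v(x) + M^\infty$ of the same evolution equation: they satisfy $\eta^\infty \le v + M^\infty$ with equality at some point $(x_0, t_0)$. The Strong Comparison Principle for Lipschitz sub/super-solutions from \cite[Theorem~32]{c12} (the very same tool invoked in the uniqueness part of Theorem~\ref{thm:ergo}, whose applicability in the present assumptions has been explained in Section~\ref{sec:assumptions}) then forces $\eta^\infty \equiv v + M^\infty$ on $\R^d \times \R$. The symmetric argument using $m^\infty$ gives $\eta^\infty \equiv v + m^\infty$, whence $M^\infty = m^\infty =: C$. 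Because any subsequence of $(t_n)$ produces the same limit $v + C$, a standard contradiction argument upgrades this into uniform convergence of the full family $\eta(\cdot,t) - v$ to $C$ as $t \to +\infty$, which is the claim.

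The main obstacle is the invocation of the Strong Comparison Principle in the \emph{parabolic} setting on $\R^d \times \R$ (rather than for the stationary ergodic problem), applied to a mixed integro-differential equation with a possibly superlinear Hamiltonian. Verifying that \cite[Theorem~32]{c12} covers this situation is exactly what is done in the last step of the proof of Theorem~\ref{thm:ergo}, and it is here that the \emph{Lipschitz} regularity of both $\eta^\infty$ and $v$ is essential, since it is what allows the linearization procedure under hypothesis (H-b) in the superlinear case.
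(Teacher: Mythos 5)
Your proof is correct and follows essentially the same route as the paper: compare $u$ with $\lambda t + v$ to get boundedness and monotonicity of the extremal gaps, pass to a uniform limit along time translates, and apply the Strong Comparison Principle of \cite[Theorem~32]{c12} to the Lipschitz limit touching $v+\mathrm{const}$. The only (harmless) variations are that you compactify via Ascoli in space--time (which needs the small regularization of $u_0$ to get equicontinuity in $t$, exactly as the paper does for existence) where the paper instead uses the contraction property to show $(w(\cdot,\phi(n)+\cdot))_n$ is Cauchy, and that you close the argument by showing $M^\infty=m^\infty$ rather than by identifying $v+\overline m$ as the unique limit point and invoking monotonicity.
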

\begin{proof}[Proof of Theorem~\ref{thm:conv}.]
We first prove that, for any $\Z^d$-periodic, Lipschitz continuous
initial data $u_0$, Eq.~\eqref{eq:main}-\eqref{eq:ic} has a solution
$u$. In order to do so, we can assume that $u_0$ is $C^2$ and bounded,
with first and second derivatives bounded as well. Indeed, general
initial data $u_0$ can be handled thanks to regularization and
stability of viscosity solutions of \eqref{eq:main}, see
e.g. \cite{bi08}. For smooth $u_0$'s, we make the classical
observation that $u_0\pm Ct$ is a super-/subsolution of
\eqref{eq:main} for $C$ large enough. We thus can apply Perron's
method and get a discontinuous solution. The comparison principle for
\eqref{eq:main} (cf. \cite{bi08}) then implies that this solution is
continuous and unique.
\medskip

We next follow closely \cite{bs01}. However, we give details for
the reader's convenience. 
 The functions $u$ and $\lambda t+v(x)$ are both solutions in
$ \R^d\times (0,\infty)$ of \eqref{eq:main}. Hence, by the
comparison principle for \eqref{eq:main}, we have 
\begin{equation}\label{eq:lt-unif-bound}
 ||u(x,t) - v(x) -\lambda t||_\infty \leq ||u_0(x)-v(x)||_\infty
\end{equation}
and,  more generally, for all $t\geq s \geq 0$ 
\begin{equation}\label{eq:lt-monotone}
\max_{x\in\R^d}\big(u(x,t) - \lambda t - v(x)\big) \leq 
\max_{x\in\R^d}\big(u(x,s) - \lambda s - v(x)\big).
\end{equation}
Introducing the notation 
\[m(t) = \max_{x\in\R^d}\big(u(x,t)-\lambda t - v(x)\big),\]
we deduce from \eqref{eq:lt-unif-bound} and \eqref{eq:lt-monotone}
that $m(t)$ is nonincreasing and bounded. In particular, there exists
$\overline m \in \R$ such that 
\begin{equation}
m(t) \searrow \overline m \text{ as } t\rightarrow\infty.
\end{equation}
Define next 
\[ w(x,t) = u(x,t) - \lambda t.\]
From \eqref{eq:lt-unif-bound} we have  for all $t>0$,
\[
||w(\cdot,t)||_\infty \leq ||u_0-v||_\infty + ||v||_\infty.
\]
This $L^\infty$-bound implies that $\{w(\cdot,t)\}_{t>0}$ is equi-Lipschitz
continuous by the results of \cite{bcci12}.  In particular, the sequence
$\{w(\cdot,n)\}_{n \in \N}$ is compact in
$C(\R^d)$. We thus can extract a converging subsequence
$(w(\cdot,\phi(n)))_n$
\[ w(x,\phi(n)) \to \overline v (x), \quad x \in \R^d.\]
We next deduce from the comparison principle for \eqref{eq:main} that
$(w(\cdot,\phi(n)+\cdot))_n$ is a Cauchy sequence in $C(\R^d\times (0,\infty))$.
Indeed, for all $t>0$,
\[
||w(\cdot, t+\phi(n))-w(\cdot, t+\phi(m))||_\infty \leq ||w(\cdot, \phi(n))
- w(\cdot,\phi(m))||_\infty.
\]
We thus deduce that $w(x,\phi(n)+t)$ converges uniformly to
$\overline w \in C(\R^N\times(0,\infty))$ solving 
\begin{equation}\label{eq::PIDEs_lambda}
\left\{
  \begin{array}{ll}
     \medskip 
     \overline w_t + F(x,D\overline w,D^2\overline w,\mathcal{I}[\overline
w]) + H(D\overline w) + \lambda = f(x) & \hbox{ in } \R^d \times (0,+\infty)\\
     \overline w(x,0)  =  \overline v(x) & \hbox{ in } \R^d.   
  \end{array}
\right.
\end{equation}
In particular, $\overline w(\cdot,t)$ is Lipschitz continuous for all $t>0$. 
Passing to the limit  $\phi(n) \rightarrow\infty$ in 
\[m(\phi(n)+t) = \max_{x\in\R^d}\big(w(x,\phi(n)+t)-v(x)\big)\]
thanks to the uniform convergence of the sequence $(w(\cdot,
\phi(n)+\cdot))_n$, we obtain that for all $t>0$, 
\[\overline m  = \max_{x\in\R^d}\big(\overline w(x,t)-v(x)\big).\]
Thanks to (F3), (H2) and the Lipschitz continuity of $\overline
w(t,\cdot)$ and $v$, we can apply the Strong Comparison Principle from
\cite{c12}: for all $t>0$, $x \in \R^d$,
\[\overline w(x,t) = v(x) + \overline m.\]
This implies in particular that $\overline v =v+ \overline m$. Hence,
$v+\overline m$ is the only possible limit of $w(\cdot,n)$; hence
$w(x,n) \to v (x)+\overline m$ uniformly as $n \to \infty$. Now for $t \ge n$,
we have
$$
\|u(\cdot,t) - \lambda t - v - \overline m \|_\infty=
\|w(\cdot,t) - v -\overline m \|_\infty \le 
\|w(\cdot,n) - v -\overline m\|_\infty \to 0
$$ as $n \to \infty$.  Since the stationary ergodic equation is blind
to additive constants, we can replace $v$ with $v+\overline m$. The
proof is now complete.
\end{proof}

\section{Remarks and Extensions}\label{sec:extensions}

In this section, we explain how to study the large time asymptotic of
solutions of equations that do not satisfy the assumptions of our main
results.

The first key point concerns the {\em existence and uniqueness} of
viscosity solutions, both for the evolution equation and for the
approximate equations which are used to solve the ergodic problem. For
this part, (F4) plays a key role and imposes rather strong
requirements on the $F_i$'s, which a priori should reduce drastically
the growth possibilities in (F2). But one may turn around this
difficulty by using a truncation argument: truncating the gradient
terms in $F_1$ and $F_2$ leads to an equation where existence and
uniqueness of viscosity solutions holds by assuming only that (F4)
holds for bounded $p$'s, then one can use (F2) to show that the
solution is actually Lipschitz continuous (assuming that $u_0$ is
Lipschitz continuous or using a regularizing effect). In that way,
with a slight modification of the formulation of (F4) (which has only
to hold for bounded $p$'s), we reconcile Assumptions (F2) and (F4).

But can $F_1, F_2$ really be superlinear? This question is also
connected to the generalization of the $H$-term and we start by this
issue.

Following the framework of \cite{bcci12}, one should be able to
replace the term $H(Du)$ by $F_0(Du,D^2u,\I[u])$. We recall that, as
far as the Lipschitz regularity is concerned, the key point is that
this term is independent of $x$ but it can be ``degenerate''. This is
a ``neutral'' term which does not bring anything positive nor creates
any difficulty. The same is true for the comparison result.

For proving Theorem~\ref{thm:ergo}, we need the analogue of
(H-a)--(H-b), i.e. we have to examine $F_0(cp,c X,cI)$ as $c$ tends to
infinity. The analogous assumptions are 
\begin{itemize}
 \item [(F0-a)] (\textit{Sublinearity}) $F_0$ is locally Lipschitz
   continuous and there exists a nonlinearity $\overline{F_0}$,
   $1$-positively homogeneous such that
\[\lim_{c\rightarrow\infty} \frac1cF_0(cp,c X,cI) = \overline{F_0}(p,X,l)\; .\]
 \item [(F0-b)] (\textit{Superlinearity}) $F_0$ is locally Lipschitz
   continuous and there exist $m > 1$, $\eta >0$ such that, for any $c\geq 1$, for any $p\in \R^d, X\in \S_d,l\in \R$
\[\frac1cF_0(cp,cX,cI) -F_0(p,X,I) \geq  \eta (c^{m-1}-1) |p|^m -\eta^{-1}\; .\]
\end{itemize}
This last assumption is obviously satisfied if we add a
$1$-homogeneous function of $p,X,l$ to an $H$ for which of course the
superlinear case may come from $F_1,F_2$: typically in the case when
$F_0\equiv 0$, one may assume that (H-b) holds. On the other hand
(F0-b) summarizes the two types of properties we use to prove our
results.

We may also assume that $F_1,F_2$ satisfy (F0-b) with the same $m$:
in this case, the proof follows along the same lines, $|p|^m$ being
replaced by $|p_1|^m + |p_2|^m$.

\smallskip

Let us finally mention that our results can be naturally extended to
second order fully nonlinear parabolic integro-differential equations,
such as those appearing in stochastic control of jump processes. For
instance, we can consider the following \textit{Bellman-Isaacs
  Equations} in $\R^d$ 
\begin{eqnarray*}
\partial_t u + \sup_{\gamma\in\Gamma} \inf_{\delta\in \Delta}\Big( & - \frac12 \hbox{tr}(A^{\gamma,\delta}  (x) D^2 u)  - \mathcal J^{\gamma,\delta} [x,u] 
- b^{\gamma,\delta}(x) \cdot D u  - f^{\gamma,\delta}(x) &\\
 &
 - \frac12 \hbox{tr}(a^{\gamma,\delta} _1(x_1) D_{x_1x_1}^2 u)  - \mathcal J_{x_1}^{\gamma,\delta} [x,u] 
- b_1^{\gamma,\delta}(x) \cdot D_{x_1} u  & \\
&
- \frac12 \hbox{tr}(a^{\gamma,\delta} _2(x_2) D_{x_2x_2}^2 u)  - \mathcal J_{x_2}^{\gamma,\delta} [x,u] 
- b_2^{\gamma,\delta}(x) \cdot D_{x_2} u 
& \Big) =0
\end{eqnarray*}
where $\mathcal J^{\gamma,\delta} [x,u]$ is a family of L\'evy-It\^o
operators associated with a common L\'evy measure $\mu^0$ and a family
of jump functions $j_0^{\gamma,\delta}(x,z)$, respectively $\mathcal
J_{x_i}^{\gamma,\delta} [x,u]$ are families of L\'evy-It\^o operators
associated with the L\'evy measures $\mu^i$ and the families of jump
functions $j_i^{\gamma,\delta}(x_i,z)$, for $i=1,2$. We consider that
$A^{\gamma,\delta}, a^{\gamma,\delta} _i, b^{\gamma,\delta}_i,
f^{\gamma,\delta} $ are bounded in $W^{1,\infty}$, uniformly in
$\gamma$ and $\delta$.

\section{Examples}\label{sec:ex}

In this section we give several examples for which the previous
results (and their extensions from Section~\ref{sec:extensions})
apply.  We recall that proving this type of behavior required two main
tools: regularity of solutions, and Strong Maximum Principle (shortly
SMP) for the stationary equation rescaled as in equation
(\ref{eq:homog}), and Strong Comparison Principle (shortly SCP) for
the evolution equation.  We comment three classes of equations:
classical diffusions, composed local-nonlocal equations, and mixed
equations.

\subsection{Classical nonlocal diffusions} 

Our results apply to a large class of classical PIDE and it would be
difficult to illustrate this generality. Therefore, we rather present
two particular equations, where we point out some extensions of our
results to fractional exponents of lower order $\beta<1$, both in the
sublinear and superlinear case.

\begin{example}[\bf Fractional diffusions with drift]\label{ex:fr_drit}
The model example in this case is the following one
\begin{equation*} \left\{
\begin{array}{ll}\medskip
\partial_t u + (-\Delta)^\beta u + b(x) \cdot Du = f(x) & \hbox{ in } \R^d\times (0,\infty) \\
u(\cdot, 0) = u_0(\cdot) & \hbox{ in } \R^d
\end{array}\right.
\end{equation*}
where the vector field $b\in C^{0,\tau}(\R^d;\R^d)$ is $\Z^d$-periodic
and $u_0, f$ are continuous and $\Z^d$-periodic. The nonlocal term is
a fractional Laplacian of order $\beta\in(1-\tau,2)$.
$$
(-\Delta)^\beta u (x) = -\int_{z \in \R^{d}} \big( u(x + z) - u(x) - Du(x)\cdot
z 1_{B^{d}}(z)\big)\frac{dz}{|z|^{d+\beta}}\; .
$$ 

Then, for H\"older continuous initial data, the solution is H\"older
continuous only if $\beta > 1-\tau$. The interesting news in this case
is that the corresponding stationary ergodic problem still satisfies
SMP, even if $\beta<1$ (by translations of measure supports, see
\cite{c12}). Hence, by similar arguments, Theorems \ref{thm:ergo} and
\ref{thm:conv} hold for fractional exponents if $ 1-\tau
  < \beta< 2$.

Therefore, the equation has a H\"older continuous viscosity solution
$u(x,t)$ which for large times behaves like $\lambda t + v(x)$, with
$(\lambda,v)\in\R\times C^{0,\alpha}(\R^d)$ an ergodic pair for
\[(-\Delta)^\beta v + b(x) \cdot Dv = f(x) -\lambda \hbox{ in }\R^d.\]
\end{example}

\begin{example}[\bf Superlinear Equations]\label{ex:superlin}
Consider the equation
\begin{equation*} \left\{
\begin{array}{ll}\medskip
\partial_t u + (-\Delta)^\beta u + b(x)\vert Du \vert  + |Du|^m= f(x) & \hbox{ in } \R^d\times (0,\infty) \\
u(\cdot, 0) = u_0(\cdot) & \hbox{ in } \R^d.
\end{array}\right.
\end{equation*}
with $u_0,f,b\in C^{0,\tau}(\R^d)$, $\beta\in(1-\tau,2)$, and $m\geq
0$. Here we point out that we may allow $m$ to be less than $1$, even
if the nonlinearity is not locally Lipschitz continuous because in the
linearization process we are using, we can use the H\"older continuity
of $p \mapsto |p|^m$ instead of its local Lipschitz continuity.

Solutions are Lipschitz continuous for Lipschitz initial data, for
all $\beta>1$, and $\tau-$ H\"older continuous for $C^{0,\tau}$
initial data, when $1-\tau<\beta\leq 1$. To establish the long time
behavior for any $\beta\in(1-\tau,2)$ we use as before SMP in the
sublinear case $m\leq1$, and Lemma \ref{lemma:unif_cont} in the
superlinear case $m>1$.
\end{example}

\subsection{Composed local-nonlocal diffusion} 

We refer in the sequel examples to equations which are strictly
elliptic in a generalized sense, as introduced in \cite{bci11}: at
each point the nonlinearity is either non-degenerate in the second
order term, or is non-degenerate in the nonlocal term. One can imagine
a partition of the periodic cell in two components such that the local
diffusion vanishes on one of them and there the nonlocal term is uniformly
elliptic, and vice-versa on the other component.

\begin{example}\label{ex:composed}
The model example in this case is the following one
\begin{equation*} \left\{
\begin{array}{ll}\medskip
\partial_t u - a_1(x)\Delta u + a_2(x)(-\Delta)^\beta u + |Du|^m = f(x) & \hbox{ in } \R^d\times (0,\infty) \\
u(\cdot, 0) = u_0(\cdot) & \hbox{ in } \R^d.
\end{array}\right.
\end{equation*}
with $a_1(\cdot)= \sigma^2(\cdot), a_2(\cdot)\geq 0$, and where
$\sigma,a_2, f, u_0$ are Lipschitz continuous and $\Z^d$-periodic. We
take $\beta\in(1,2)$, and $m\geq 1$.

Solutions for this equation are Lipschitz continuous when $\beta>1$,
provided the two diffusions do not cancel simultaneously
\begin{equation}\label{eq:deg}
a_1(x) + a_2(x) \geq a_0 >0.
\end{equation}
Also, SMP holds (by translations of measure supports) for all
$\beta>1$ even if one of the two local or nonlocal diffusion
completely degenerates.

However, in order to prove existence of solutions as well as to use
SCP, the equation must satisfy hypotheses which ensure
comparison/uniqueness results. For this reason, we can either take $
a_2(x) \equiv a_2 >0 \hbox{ for all } x\in\R^d,$ or we need to impose
that $a_2(\cdot)^{1/ \beta}$ is Lipschitz continuous, in which case
$a_2(\cdot)$ can vanish. We emphasize on the fact that in the latter
case, we can deal with degenerate nonlocal diffusions and hence work
under assumption (\ref{eq:deg}).

Indeed, when $a_2(x)>0$ performing the change of variables
$z=\alpha(x)z'$, with $\alpha(x) = a_2(x)^{1/\beta}$ the nonlocal
diffusion can be re-written as
\begin{eqnarray*}
[a_2 (-\Delta)^\beta u] (x) &~\hspace{-3mm}=& ~\hspace{-4mm}
\int_{\R^{d}} \big( u(x + z) - u(x) - Du(x)\cdot
z 1_{B^{d}}(z)\big)\frac{a_2(x) dz}{|z|^{d+\beta}}\\
 &~\hspace{-3mm} = & ~\hspace{-4mm}
\int_{\R^{d}} \big( u(x + \alpha(x)z) - u(x) - Du(x)\cdot
\alpha(x)z 1_{B^{d}}(z)\big)\frac{a_2(x)\alpha(x)^d dz}{\alpha(x)^{d+\beta}|z|^{d+\beta}}. 
\end{eqnarray*}
Hence we get a L\'evy-It\^o operator with jump function  $j(x,z) = \alpha(x)z = a_2^{1/\beta}(x)z$

\[ a_2(x) (-\Delta)^\beta u (x)
 =
\int_{\R^{d}} \Big( u(x + a_2^{1/\beta}(x)z) - u(x) - Du(x)\cdot
a_2^{1/\beta}(x)z 1_{B^{d}}(z)\Big)\frac{dz}{|z|^{d+\beta}}.\]
Note that when $a_2(x)=0$ for some $x\in \R^d$ the above identity remains true.

This nonlocal operator satisfies the hypothesis $(A1)$ of \cite{bi08}: the first three integral bounds in $(A1)$ are immediately satisfied, whereas the last one is true provided that $\beta>1$ and $a_2(\cdot)^{1/\beta}$ is Lipschitz
\begin{eqnarray*}
 \int_{\R^d\setminus B}\big|j(x,z)-j(y,z)\big|\frac{dz}{|z|^{d+\beta}}
& =  &
\int_{\R^d\setminus B}\big|a_2(x)^{1/\beta}-a_2(y)^{1/\beta}\big||z|\frac{dz}{|z|^{d+\beta}}\\
& \leq &
\bar c_0 |x-y|\int_{\R^d\setminus B}|z|\frac{dz}{|z|^{d+\beta}} = \bar c |x-y|.
\end{eqnarray*}
Under this condition, we have the expected asymptotic behavior.
\end{example}

\subsection{Mixed local-nonlocal equations} 

These equations correspond to the so called \emph{mixed ellipticity:}
at each point the nonlinearity is degenerate both in the second order
term and the nonlocal term: classical diffusion makes the equation
uniformly elliptic in one direction only, say $x_1$, and fractional
diffusion makes it uniformly elliptic in the complementary direction
$x_2$. The interesting part is that this combination renders the
nonlinearity uniformly elliptic overall. In \cite{c12} these equations
were proven to satisfy the Strong Maximum and Comparison Principles,
and in \cite{bcci12} regularity results were established.  We comment
below how these two ingredients combine and give the large time
behavior. We first consider a toy-model and then a more general
equation, so that we can comment precisely the large set of
assumptions presented in Section ~\ref{sec:assumptions}.

\begin{example}[\bf Toy model]\label{ex:toy}
The model example in this case is the following one
\begin{equation*} \left\{
\begin{array}{ll}\medskip
\partial_t u - \Delta_{x_1} u + (-\Delta)_{x_2}^\beta u + |Du|^m = f(x) & \hbox{ in } \R^d\times (0,\infty) \\
u(\cdot, 0) = u_0(\cdot) & \hbox{ in } \R^d.
\end{array}\right.
\end{equation*}
where $u_0,f:\R^d\rightarrow \R$ are Lipschitz continuous and
$\Z^d$-periodic, $1<\beta<2, m\ge 0$. As before, we point out that we
may allow $m$ to be less than $1$, even if the nonlinearity is not
locally Lipschitz continuous, because in the linearization process we
are using, we can use the H\"older continuity of $p \mapsto |p|^m$
instead of its local Lipschitz continuity.

\medskip

The symbol $\Delta_{x_1} u $ stands for the classical Laplacian with
respect to $x_1$ variable and $(-\Delta)_{x_2}^\beta$ for the
fractional Laplacian of order $\beta$ with respect to $x_2$ variable
$$
(-\Delta)_{x_2}^\beta u (x) = \int_{z \in \R^{d_2}} \big( u(x_1, x_2 + z) - u(x) - D_{x_2}u(x)\cdot
z 1_{B_1^{d_2}}(z)\big)\frac{dz}{|z|^{d_2+\beta}}\; .
$$ 
Then, by the results of \cite{bi08} and \cite{bcci12}, this initial
value problem has a unique solution $u(x,t)$ which is Lipschitz
continuous in $x$ for all $t\ge 0$, and $\Z^d$-periodic. By
Theorem~\ref{thm:conv} the solution asymptotically behaves like
$$
u(x,t) = v(x) + \lambda t + o_t(1), \hbox{ as } t\rightarrow \infty, 
$$
where $(\lambda,v)$ is a solution of the stationary ergodic problem
$$
 - \Delta_{x_1} v + (-\Delta)_{x_2}^\beta v + |Dv| = f(x) -\lambda \; \hbox{ in } \R^d.
$$

 Solutions of this initial value problem are H\"older continuous for
  $\beta\in (0,1]$ and Lipschitz continuous for $\beta\in(1,2)$, and
any $m\geq 0$.  However to solve the stationary problem we either
require for the SMP to hold in which case it is necessary that
$\beta>1$ and $m\leq 1$ (see \cite{c12}), or we can use an estimate as
in Lemma\ \ref{lemma:unif_cont} to deal with the superlinear case
$m>1$. Finally, to establish the long time behavior we need to
employ SCP: this holds if $m\leq 1$ for H\"older solutions, whereas
for $m>1$ it holds only if solutions are Lipschitz continuous, hence
$\beta >1$. All in all, the long time behavior is established both in
the sublinear and superlinear case, for fractional exponent
$\beta>1$.
\end{example}

\begin{example}[{\bf Mixed equations with first-order-terms}] \label{ex:mixed_gr}
Consider the initial value problem with mixed local-nonlocal diffusion
and partial gradient terms
\begin{equation*} \left\{
\begin{array}{ll}\medskip
\hspace{-2mm} \partial_t u - a_1(x_1)\Delta_{x_1} u - \mathcal
I_{x_2}[u] + b_1(x_1) |D_{x_1}u|^{k_1} + b_2(x_2) |D_{x_2}u|^{k_2} = f
& ~\hspace{-5mm}\hbox{ in }\R^d\times(0,\infty)
\\ \hspace{-2mm} u(\cdot, 0) = u_0(\cdot) &~\hspace{-5mm} \hbox{ in
}\R^d.
\end{array}\right.
\end{equation*} 
with $u_0, f$ Lipschitz continuous and $\Z^d$-periodic, $a_1(\cdot) =
\sigma^2(\cdot) \ge a_0$ with $\sigma(\cdot)$ a Lipschitz continuous
and $\Z^{d_1}$ periodic function, $b_i\in C^{0,\tau_i}(\R^{d_i})$ for
$\tau_i\in(0,1)$ and $\Z^{d_i}$ periodic, $i=1,2$.  { The exponents
  must satisfy $ k_1\in(0, 2+\tau_1), k_2\in(0,\beta+\tau_2)$.}  The
nonlocal operator $\mathcal I_{x_2}[u]$ is is of L\'evy-It\^o form,
associated to a L\'evy measure $\mu$ satisfying (M1)-(M5), with
fractional exponent $\beta\in(1,2)$
$$ 
\mathcal I_{x_2}[u](x) = \int_{z \in \R^{d_2}} \big( u(x_1, x_2 +
z) - u(x) - D_{x_2}u(x)\cdot z 1_{B_1^{d_2}}(z)\big)\mu(dz)\; .
$$

There exists a solution $u(x,t)$ of this initial value problem  which
is Lipschitz continuous in space.  Once again, by
Theorem~\ref{thm:conv} the solution asymptotically behaves like
\[u(x,t) = v(x) + \lambda t + o_t(1), \hbox{ as } t\rightarrow \infty, \]
where $(\lambda,v)$ is a solution of the stationary ergodic problem
\[ - a_1(x_1)\Delta_{x_1} v - \mathcal I_{x_2}[v]  + b_1(x_1)
|D_{x_1}v|^{k_1} +  b_2(x_2) |D_{x_2}v|^{k_2}  = f(x) -\lambda \;
\hbox{ in } \R^d.\]

We recall that, in \cite{bcci12}, the regularity of solutions is
obtained independently for each set of variables $x_1,x_2$. Roughly
speaking, and taking into account the remarks of
Section~\ref{sec:extensions} for the superlinear cases, this requires
(i) the special form of the equation with no coupling between the
$x_1$ and $x_2$ dependences, and (ii) the $x_1$ and $x_2$ parts of the
equation have both to satisfy  hypotheses which ensure Lipschitz
regularity results and (iii) comparison results. This justifies the
above constraints on $a_1$, $b_i, k_i, \tau_i$ (i=1,2).

We insist on the fact that we can only deal in the variable $x_2$ with
L\'evy-It\^o operators, since uniqueness results were established in
\cite{bi08} for this type of operators only. The general case is still
being left as an important open problem. For this reason, we cannot
have a coefficient of the form $a_2(x_2)$ in front of the nonlocal
diffusion $\mathcal I_{x_2}$ (except for the cases discussed in
Example \ref{ex:composed}), even if, under suitable assumption, we
could still have a regularity result.

Thus, one first sees that the solution is Lipschitz continuous with
respect to $x_1$, for directional gradient terms $b_1(x_1)
|D_{x_1}u|^{k_1}$ having natural growth $k_1\leq 2+\tau_1$, and
H\"older continuous with respect to the $x_2$ variable for directional
gradient terms $b_2(x_2) |D_{x_2}u|^{k_2}$ having natural growth
$k_2\leq \beta+\tau_2$. When $\beta >1$, the solution is globally
Lipschitz continuous.

For the stationary ergodic problem, there are two cases we can treat:
in the {\em sublinear case}, namely $k_1,k_2\leq 1$, we use as before
SMP (we refer the reader to Example \ref{ex:toy} to see that having
H\"older continuous nonlinearities does not create any additional
difficulty). In the {\em superlinear case} we require $k_1,k_2 > 1$
and
\begin{equation}\label{eq:constraint_b}
  b_1(x_1) , b_2(x_2) \geq b_0 > 0\hbox{ for all }x=(x_1,x_2)\in\R^d.
\end{equation}
There is a slight difficulty to prove Lemma \ref{lemma:unif_cont} if
$k_1 \neq k_2$: in order to do it, say in the case when $k_1 < k_2$,
we divide the equation satisfied by $w^\delta$ by $c_\delta^{k_1-1} $
(as we did it with $c_\delta^{m-1}$ for Lemma \ref{lemma:unif_cont}):
according to the above assumption on $b_1,b_2$ the inequality can be
written as
$$ b_0 |D_{x_1}w^\delta |^{k_1} + b_0 c_\delta^{k_2-k_1} |D_{x_2}w^\delta|^{k_2} \leq \cdots\; ,$$
and since we may assume that $c_\delta \geq 1$, we are lead to
$$ b_0 |D_{x_1}w^\delta |^{k_1} + b_0  |D_{x_2}w^\delta|^{k_2} \leq \cdots\; .$$
This allows to get the same conclusion as in Lemma \ref{lemma:unif_cont} since (for proving Claim 1) if $D w^\delta$ is large, then
$$
b_0 |Dw^\delta |^{k_1} \leq 2^{k_1}(b_0 |D_{x_1}w^\delta |^{k_1} +
b_0 |D_{x_2}w^\delta|^{k_2})\; ,
$$ 
because we have either
$|D_{x_1}w^\delta |\geq |Dw^\delta |/2$ or $|D_{x_2}w^\delta |\geq
|Dw^\delta |/2$.

On the other hand, solutions of this equation must satisfy SCP, in
order to have the above asymptotic behavior. To this end we need the
solutions to be Lipschitz continuous (thus $\beta>1$) whenever we deal
with $k_1,k_2\ge 1$.

\end{example}

\begin{example}[{\bf Sublinear vs. Superlinear Gradients Terms}]\label{mixed_subsup}

Consider the initial value problem with mixed local-nonlocal diffusion
and partial gradient terms
\begin{equation*} \left\{
\begin{array}{ll}\medskip
~\hspace{-2mm} \partial_t u - \sigma^2\Delta_{x_1} u + \mathcal I_{x_2}[u] + b_1
|D_{x_1}u|^{k_1} + b_2 |D_{x_2}u|^{k_2} + |Du|^m= f &
\hbox{ in  }\R^d\times
(0,\infty) \\ ~\hspace{-2mm}  u(\cdot, 0) = u_0(\cdot) & \hbox{ in  }\R^d
\end{array}\right.
\end{equation*} 
where $u_0,\sigma, f, b_i$ and $k_i$ are as before. In addition to the
previous example we have an extra full gradient term of growth $m\geq
0$.

The above arguments apply here. We just point out the interplay
between the different gradient terms. When solving the stationary
ergodic problem, we distinguish several different cases.  When $ m
\leq 1$ the equation behaves as in the previous example. Namely either
$k_1,k_2\leq 1$ and the equation is {\em sublinear}, thus we apply SMP
to conclude as usual; or we require $k_1,k_2>1$ in which case we have
the constraint (\ref{eq:constraint_b}).  When $m>1$, we have to
distinguish two situations: either $m>\max(k_1,k_2)$ and hence the
full gradient takes over the other two directional gradients, and in
this case $b_1(\cdot),b_2(\cdot)$ can change sign. Or we can deal with
$m<\min(k_1,k_2)$ and we argue as in Example \ref{ex:mixed_gr}, under
the additional assumption (\ref{eq:constraint_b}).

\end{example}

\bibliographystyle{siam} 
\bibliography{pide}

\end{document}